\DeclareMathOperator\T{\Bbb T}
\DeclareMathOperator\C{\mathbb C}
\DeclareMathOperator\Z{\mathbb Z}
\DeclareMathOperator\Gr{\mathrm Gr}
\DeclareMathOperator\Fl{\mathrm Fl}
\DeclareMathOperator\FF{\mathcal F}
\DeclareMathOperator\codim{codim}
\DeclareMathOperator\sym{Sym}
\DeclareMathOperator\sgn{sgn}
\DeclareMathOperator\spa{span}
\DeclareMathOperator\Sing{Sing}
\DeclareMathOperator\R{\mathcal R}
\def\Ia{I^{(a, a+1)}}
\DeclareMathOperator\Id{Id}
\DeclareMathOperator\PP{P}
\def\gl2{\mathfrak{gl}_2}
\DeclareMathOperator\sm{sm}
\DeclareMathOperator\m{m}
\def\ep{\epsilon}
\def\opp{\check}
\newtheorem{fact}{Fact}[section]
\newtheorem{lemma}[fact]{Lemma}
\newtheorem{theorem}[fact]{Theorem}
\newtheorem{definition}[fact]{Definition}
\newtheorem{example}[fact]{Example}
\newtheorem{rremark}[fact]{Remark}\newenvironment{remark}{\begin{rremark} \rm}{\end{rremark}}
\newtheorem{proposition}[fact]{Proposition}
\newtheorem{corollary}[fact]{Corollary}
\title[Classes of conormal bundles of Schubert varieties
and weight functions]
{Cohomology classes of conormal bundles of Schubert varieties
and Yangian weight functions}
\author{R. Rim\'anyi}
\address{Department of Mathematics, University of North Carolina at Chapel Hill, USA}
\email{rimanyi@email.unc.edu}
\author{V. Tarasov}
\address{Department of Mathematical Sciences, Indiana University---Purdue University Indianapolis, USA and St.\,Petersburg Branch of Steklov Mathematical Institute, Russia}
\email{vt@math.iupui.edu, vt@pdmi.ras.ru}
\author{A. Varchenko}
\address{Department of Mathematics, University of North Carolina at Chapel Hill, USA}
\email{anv@email.unc.edu}
\subjclass[2000]{Primary: 14M15, 17B37; Secondary: 53D12}
\begin{document}


\begin{abstract}
We consider the conormal bundle of a Schubert variety $S_I$ in the cotangent bundle $T^*\!\Gr$ of the Grassmannian $\Gr$ of $k$-planes
in $\C^n$. This conormal bundle has a fundamental class ${\kappa_I}$
in the equivariant cohomology $H^*_{\T}(T^*\!\!\Gr)$.
Here  $\T=(\C^*)^n\times \C^*$. The torus $(\C^*)^n$ acts on $T^*\!\Gr$ in the standard
way and the last factor $\C^*$ acts by multiplication on fibers of the bundle. We express
 this fundamental class as a sum $Y_I$ of the Yangian $Y(\gl2)$ weight functions $(W_J)_J$.
 We describe a relation of $Y_I$ with the double Schur polynomial $[S_I]$.

A modified version of the $\kappa_I$ classes, named $\kappa'_I$, satisfy an orthogonality relation with respect to an inner product induced by integration on the non-compact manifold $T^*\!\Gr$. This orthogonality is analogous to the well known orthogonality satisfied by the classes of Schubert varieties with respect to integration on $\Gr$.

 The classes $(\kappa'_I)_I$ form a basis in the suitably localized
 equivariant cohomology $H^*_{\T}(T^*\!\!\Gr)$. This basis depends on the choice of the coordinate
 flag in $\C^n$. We show that the bases corresponding to different coordinate flags are related
 by the Yangian R-matrix.

\end{abstract}

\maketitle

\thispagestyle{empty}
\section{Introduction}

The equivariant cohomology of the cotangent bundle of a Grassmannian has hidden Yangian symmetries,
see for example \cite{Vas1, Vas2, vara,naka, yangian}. In this paper we give another example of that symmetry.

The Yangian symmetry considered in this paper is a special case of Yangian symmetries on Nakajima quiver varieties, namely corresponding to the quiver $A_1$ \cite{naka}. In this special case the set of torus fixed points is finite, making it possible to perform fairly explicit calculations.

We study the conormal bundle of a Schubert variety $S_I$ in the cotangent bundle $T^*\!\Gr$ of the Grassmannian $\Gr$ of $k$-planes
in $\C^n$.
We consider a resolution $\tilde{S}_I$ of $S_I$, which lies in a flag variety $\Fl$. The resolution map is the restriction of the
natural forgetful map $\pi: \Fl \to \Gr$. The conormal bundle $C\!\tilde{S}_I\subset T^*\!\Fl$ of $\tilde{S}_I$
has an equivariant fundamental cohomology class $[C\!\tilde{S}_I]$ in the torus equivariant cohomology ring $H^*_{\T}(T^*\!\Fl)=H_{\T}^*(\Fl)$.
Here
 $\T=(\C^*)^n\times \C^*$. The torus $(\C^*)^n$ acts on $T^*\!\Fl$ in the standard way and
  the last factor $\C^*$ acts by multiplication on fibers of the bundle.
Our main object of study is
$$\kappa_I=\pi_*([C\!\tilde{S}_I]) \in H_{\T}^*(\Gr)=H_{\T}^*(T^*\!\Gr),$$
which we call the {\em equivariant fundamental cohomology class} 
of the cotangent bundle of $S_I$.

In Theorem \ref{thm:main} we express $e_h\cdot {\kappa_I}$ as a sum $Y_I$ of the Yangian $Y(\gl2)$ weight functions $(W_J)_J$.
Here $e_h$ is an explicit cohomology class independent of $I$, and $e_h$ is not a zero-divisor.

 In Section \ref{sec:schur} we
  consider the fundamental  class $[S_I]$ of the Schubert variety in the equivariant cohomology $H^*_{(\C^*)^n}(\Gr)$
 and obtain  $[S_I]$  as a suitable leading coefficient of the class ${\kappa_I}$. The classes $[S_I]$ are
key objects in Schubert calculus. They are represented by the double Schur polynomials (in the associated Chern roots).
Proposition \ref{prop:leading} and Corollary \ref{cor:leading}
say that the double Schur polynomials $[S_I]$  are leading coefficients of Yangian weight functions $W_I$, and of their sums $Y_I$.

In Section \ref{sec:orto} we define a modified version $\kappa'_I$ of $\kappa_I$.
 The transition matrix from $(\kappa_I)_I$ to $(\kappa_J')_J$ is an upper triangular matrix with integer coefficients and ones in the diagonal.
 For every $I$ the difference $\kappa'_I-\kappa_I$ is supported on $p^{-1}( S_I - S^o_I)$, where $S_I^o$ is the Schubert cell, and $p$ is the projection of the bundle $p:T^*\!\Gr \to \Gr$.  We show that the $\kappa'_I$ classes satisfy an orthogonality relation with respect to an inner product induced by integration on the non-compact manifold $T^*\!\Gr$. This orthogonality is analogous to the well known orthogonality satisfied by the classes of Schubert varieties with respect to integration on $\Gr$.

 The classes $(\kappa'_I)_I$ form a basis in the suitably localized
 equivariant cohomology $H^*_{\T}(T^*\!\!\Gr)$. This basis depends on the choice of the coordinate
 flag in $\C^n$. In Section \ref{sec R} we show that the bases corresponding to different coordinate flags are related
 by the Yangian R-matrix.

\smallskip

The weight functions were used in \cite{TV2} to describe $q$-hypergeometric
solutions of the qKZ equations associated with
the Yangian $Y(\gl2)$. The $q$-hypergeometric solutions are of the form
\begin{equation}
\label{hg}
I_\gamma(z_1,\dots,z_n) = \sum_J \left(\int_\gamma \Phi(t_1,\dots,t_k,z_1,\dots,z_n) W_J(t_1,\dots,t_k,z_1,\dots,z_n) dt_1\dots dt_k\right) v_J
\end{equation}
where $(v_J)$ is a basis of a vector space, $\gamma$ is an integration cycle parametrizing solutions,\\
$\Phi(t_1,\dots,t_k,z_1,\dots,z_n)$ is a (master) function independent of
the index $J$, the functions
\\
$W_J(t_1,\dots,t_k,z_1,\dots,z_n)$ are the weight functions.
To express the fundamental class ${\kappa_I}$ we replace in the weight functions
the integration variables $t_1,\dots,t_k$ with the Chern roots of the canonical bundle over $\Gr$.

As explained in \cite{TV2}, the $q$-hypergeometric solutions (\ref{hg}) identify the qKZ equations
with a suitable discrete Gauss-Manin connection. In \cite{TV2}, the weight functions were identified with the cohomology
classes of a discretization of a suitable de Rham cohomology group. Our formula for the cohomology class of the conormal bundle of a Schubert
variety in terms of weight functions indicates a connection between that discrete de Rham cohomology group of \cite{TV2}
and the equivariant cohomology
of the Grassmannian.

\medskip
This paper is a part of our project of identification of the equivariant cohomology of partial flag varieties with
Bethe algebras of quantum integrable systems, see \cite{V, RV, RSV, RTVZ, yangian}. On this subject see also for example \cite{BMO, NS}.

\bigskip

\noindent{\bf Acknowledgement.} R.R. is supported in part by NSA grant CON:H98230-10-1-0171. V.T.
is supported in part by NSF grant DMS-0901616. A.V. is supported in part by NSF grant DMS-1101508.
A.V. thanks the Hausdorff Research Institute for Mathematics and the Institut des Hautes \'Etudes Scientifiques
for hospitality. We are grateful for L. Feh\'er for useful discussions.

\section{preliminaries}

\subsection{Blocks} \label{sec:prelim:subsets}
The positive integers $k\leq n$ will be fixed throughout the paper.
\smallskip

In Sections \ref{sec:weight} and \ref{sec:geo} we will parameterize various objects in representation theory and geometry by $k$ element subsets $I$ of $\{1,\ldots,n\}$. We will use some notation on the `blocks' in $I$, as follows. Write $I$ as a disjoint union
$$\begin{matrix}
I & = & I_1 & \cup & I_2 & \cup & \ldots & \cup & I_l \\
& = & \{i_1,\ldots, i_{v(1)}\} & \cup & \{i_{v(1)+1}, \ldots, i_{v(2)}\} & \cup & \ldots & \cup & \{i_{v(l-1)+1},\ldots, i_{v(l)}\}
\end{matrix}$$
of maximal intervals of consecutive numbers, that is, we assume
\begin{itemize}
\item{} $i_1<i_2<\ldots < i_{v(l)}$ ($v(l)=k$),
\item{} $i_{v(c-1)+1},\ldots, i_{v(c)}$ is an interval of consecutive integers for $c=1,\ldots,l$ (we put $v(0)=0$),
\item{} $i_{v(c)} + 1 < i_{v(c)+1}$ for all $c=1,\ldots,l-1$.
\end{itemize}
The subsets $I_1,\ldots,I_l$ will be called the blocks of $I$. The lengths of the blocks will be denoted by $m_c=|I_c|=v(c)-v(c-1)$, $c=1,\ldots,l$.
Let $\m=(m_1,\ldots,m_l)$. For $a\in \{1,\ldots,k\}$ let $i_{\hat{a}}$ be the largest element of the block containing $i_a$. That is, $\hat{a}=v(c)$ for some $c\in \{1,\ldots,l\}$. We set $I! = m_1! \cdots m_l!$.

\smallskip

Let $\ell(I)=\sum_{a=1}^k (i_a-a)$.

\subsection{Symmetrizer operations} \label{sec:prelim:sym}

For a function $f$ of the variables $t_1,\ldots,t_k$ we set
$$\sym_{S_k} f = \sum_{\sigma\in S_k} f(t_{\sigma(1)}, \ldots, t_{\sigma(k)}).$$

If a vector $\m=(m_1,\ldots,m_l)\in \{0,1,2,\ldots\}^l$ is given with $v(c)=\sum_{d=1}^c m_d$, $v(l)=k$, then let $S_{\m}$ be the subgroup of $S_k$ permuting the groups of variables
\begin{equation}\label{eqn:vars}
\{t_{1},\ldots, t_{{v(1)}} \} , \qquad\qquad \{t_{{v(1)+1}},\ldots, t_{{v(2)}}\}, \qquad\qquad \ldots \qquad
\{t_{{v(l-1)+1}},\ldots, t_{{v(l)}}\}
\end{equation}
independently. We set
$$\sym_{S_{\m}} f = \sum_{\sigma\in S_{\m}} f(t_{\sigma(1)}, \ldots, t_{\sigma(k)}).$$
If the function $f(t_1,\ldots,t_k)$ is symmetric in the groups of variables (\ref{eqn:vars}), then we define
$$\sym_{S_k/S_{\m}} f = \frac{1}{ \prod_{c=1}^l m_c!} \sym_{S_k} f.$$
Clearly $\sym_{S_k} f = \sym_{S_k/S_{\m}} ( \sym_{S_{\m}} f )$.

\section{Representation theory: the weight functions} \label{sec:weight}

In this section let $z_1,\ldots, z_n ,h, t_1,\ldots,t_k$ be variables.

\subsection{Weight functions}
Let $I=\{i_1, \ldots, i_k\} \subset \{1,\ldots,n\}$, $i_1<\ldots<i_k$. Following \cite{TV2} we call the function
\begin{equation}
W_I(t_1,\ldots,t_k)=h^k \sym_{S_k}\left( \prod_{a=1}^k \left( \prod_{u=1}^{i_a-1} (t_a-z_u+h) \prod_{u=i_a+1}^n(t_a-z_u) \prod_{b=a+1}^k \frac{t_a-t_b+h}{t_a-t_b}\right) \right)
\end{equation}
a weight function. For example
\begin{itemize}
\item{} for $k=1$, $1\leq i\leq n$ we have $W_{\{i\}}=h\prod_{u=1}^{i-1} (t_1-z_u+h) \prod_{u=i+1}^n (t_1-z_u)$
\item{} for $n=4$ we have $$W_{\{1,2\}}=h^2(t_1-z_3)(t_1-z_4)(t_2-z_3)(t_2-z_4)\left( (h+t_1+t_2)(h-z_1-z_2) + 2(t_1t_2+z_1z_2)\right).$$
\end{itemize}

For $I=\{i_1<\ldots<i_k\}$ and $J=\{j_1<\ldots <j_k\}$ we define $I\geq J$ if $i_a\geq j_a$ for all $a$. The following interpolation property of weight functions follows directly from the definition.

\begin{lemma}\label{lem:interpolation}
	We have $W_J|_{t_a=z_{j_a}}\not=0$. If $I\not\geq J$ then $W_I|_{t_a=z_{j_a}}=0$. \qed
\end{lemma}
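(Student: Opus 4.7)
The plan is to expand the symmetrizer as a sum over $S_k$ and determine which terms survive after the substitution. Write
\[
W_I(t_1,\ldots,t_k)=h^k\sum_{\sigma\in S_k}U_I(t_{\sigma(1)},\ldots,t_{\sigma(k)}),
\]
where $U_I$ is the expression in parentheses in the definition. After the substitution $t_a=z_{j_a}$, the $\sigma$-th summand becomes
\[
\prod_{a=1}^k\left(\prod_{u=1}^{i_a-1}(z_{j_{\sigma(a)}}-z_u+h)\prod_{u=i_a+1}^n(z_{j_{\sigma(a)}}-z_u)\prod_{b=a+1}^k\frac{z_{j_{\sigma(a)}}-z_{j_{\sigma(b)}}+h}{z_{j_{\sigma(a)}}-z_{j_{\sigma(b)}}}\right).
\]
Treating the $z_u$'s and $h$ as generic parameters, the factors $(\,\cdot\,+h)$ and the denominators $z_{j_{\sigma(a)}}-z_{j_{\sigma(b)}}$ are nonzero; the only possible source of vanishing is the middle product, which vanishes iff $j_{\sigma(a)}\in\{i_a+1,\ldots,n\}$, i.e.\ iff $j_{\sigma(a)}>i_a$. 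Hence the $\sigma$-term survives iff $j_{\sigma(a)}\le i_a$ for all $a$.

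For the vanishing claim, suppose $I\not\ge J$ and let $a_0$ be minimal with $i_{a_0}<j_{a_0}$. Any $\sigma$ for which the term survives satisfies, for every $a\le a_0$, the inequality $j_{\sigma(a)}\le i_a\le i_{a_0}<j_{a_0}$, which (since $j_1<\cdots<j_k$) forces $\sigma(a)<a_0$. Thus $\sigma$ maps the $a_0$-element set $\{1,\ldots,a_0\}$ into the $(a_0-1)$-element set $\{1,\ldots,a_0-1\}$, contradicting bijectivity. So every summand vanishes, i.e.\ $W_I|_{t_a=z_{j_a}}=0$.

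For the non-vanishing claim take $I=J$, so the surviving condition reads $j_{\sigma(a)}\le j_a$. An induction on $a$ (base: $\sigma(1)=1$; step: once $\sigma$ fixes $\{1,\ldots,a-1\}$ we have $\sigma(a)\ge a$, while $j_{\sigma(a)}\le j_a$ gives $\sigma(a)\le a$) shows $\sigma=\mathrm{id}$. Therefore $W_J|_{t_a=z_{j_a}}$ equals $h^k$ times the single identity term. In this term the denominators $t_a-t_b$ become $z_{j_a}-z_{j_b}$ for $a<b$, and each such factor is cancelled by the factor $z_{j_a}-z_{j_b}$ coming from $\prod_{u=j_a+1}^n(z_{j_a}-z_u)$ (since $j_b>j_a$). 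The result is a polynomial in $z_1,\ldots,z_n,h$ whose every factor is either of the form $(z_{j_a}-z_u)$ with $u\ne j_a,j_{a+1},\ldots,j_k$, or of the form $(\cdot)+h$, hence generically nonzero. This proves $W_J|_{t_a=z_{j_a}}\ne0$.

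The argument is purely combinatorial bookkeeping; the only place one needs to be slightly attentive is the cancellation of the $t_a-t_b$ denominators in Step 3, which ensures the identity-permutation term is an honest nonzero polynomial rather than a rational function.
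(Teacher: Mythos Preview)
Your argument is correct and is precisely the direct verification from the definition that the paper leaves to the reader (the paper gives no proof beyond the \qed). The term-by-term analysis of the symmetrizer, the pigeonhole contradiction for $I\not\ge J$, and the identification of the single surviving $\sigma=\mathrm{id}$ term when $I=J$ are all sound; your extra care in cancelling the $z_{j_a}-z_{j_b}$ denominators against factors of $\prod_{u>j_a}(z_{j_a}-z_u)$ is a nice touch, though for nonvanishing it would already suffice to observe that the identity term is a nonzero rational function in the $z_u$ and $h$.
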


Let $\R=\C[z_1,\ldots,z_n,h]((z_i-z_j+h)^{-1})_{i,j=1,\ldots,n}$. Note that $i=j$ is allowed, hence $z_1-z_1+h=h$ is invertible in $\R$. Consider the $\R$-submodule $M_{k,n}$ of $\R[t_1,\ldots,t_n]$, spanned by the weight functions $W_I(t_1,\ldots,t_n)$ for all $I\subset \{1,\ldots,n\}$, $|I|=k$.

\begin{lemma} \label{lem:free}
	The module $M_{k,n}$ is a free module with basis $W_I$.
\end{lemma}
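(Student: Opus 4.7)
The plan is to prove the lemma by showing that the weight functions $(W_I)_{|I|=k}$ are $\R$-linearly independent; since they span $M_{k,n}$ by definition, this will give that $M_{k,n}$ is free on that set. The main tool will be the interpolation property established in Lemma \ref{lem:interpolation}, exploited through a triangularity argument with respect to the partial order $\geq$ on $k$-element subsets.

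Concretely, suppose for contradiction that there is a nontrivial relation $\sum_I c_I W_I = 0$ in $\R[t_1,\ldots,t_k]$, with coefficients $c_I \in \R$ not all zero. Let $\mathcal{S}\subset\binom{[n]}{k}$ denote the support $\{I : c_I \neq 0\}$, and pick $J\in\mathcal S$ that is \emph{maximal} in $\mathcal S$ for the order $\geq$. Specializing the relation at $t_a = z_{j_a}$ for $a=1,\ldots,k$ yields
\begin{equation*}
\sum_{I\in\mathcal S} c_I\, W_I\bigl|_{t_a=z_{j_a}} \;=\; 0 \quad\text{in }\R.
\end{equation*}
For any $I\in\mathcal S$ with $I\neq J$, maximality of $J$ prevents $I>J$, so either $I$ is incomparable with $J$ or $I<J$; in either case $I\not\geq J$, and Lemma \ref{lem:interpolation} forces $W_I|_{t_a=z_{j_a}}=0$. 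Thus only the $J$-term survives, giving $c_J\cdot W_J|_{t_a=z_{j_a}}=0$.

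To reach a contradiction I use that $\R$ is an integral domain (it is a localization of the polynomial ring $\C[z_1,\ldots,z_n,h]$ at the multiplicative set generated by the elements $z_i-z_j+h$, and localizations of domains are domains). Together with $W_J|_{t_a=z_{j_a}}\neq 0$ from Lemma \ref{lem:interpolation}, this gives $c_J=0$, contradicting $J\in\mathcal S$. Hence no nontrivial relation exists, $(W_I)$ is $\R$-linearly independent, and the lemma follows.

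The only minor subtlety, and the one place to be careful, is that the specializations $t_a\mapsto z_{j_a}$ must take place in $\R$ itself rather than in some further localization: this is fine because each factor appearing in the weight functions after substitution is either a polynomial in $z_1,\ldots,z_n,h$ or an inverse of one of the invertible elements $z_i-z_j+h$, all of which live in $\R$. Once this is verified, the triangularity with respect to $\geq$ and the domain property of $\R$ finish the argument; I do not expect any further obstacle.
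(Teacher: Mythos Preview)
Your proof is correct and follows essentially the same approach as the paper: both arguments pick a maximal element $J$ in the support of a hypothetical nontrivial relation and use Lemma~\ref{lem:interpolation} to see that the substitution $t_a=z_{j_a}$ annihilates every term except the $J$-term, forcing $c_J=0$. Your write-up is simply more explicit about why such a maximal $J$ exists, why $\R$ is a domain, and why the substitution lands in $\R$.
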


\begin{proof}
In an $\R$-linear relation among the $W_I$-functions there is a term $W_J$, such that for all other terms $W_I$ in the relation we have $I\not\geq J$. Then the $t_a=z_{j_a}$ substitution cancels all the terms but $W_J$, see Lemma \ref{lem:interpolation}.
\end{proof}

\subsection{R-matrix} \label{sec:R}

For $1\leq a<k$, $I\subset \{1,\ldots,n\}$, $|I|=k$ let $\Ia$ be the $k$-element subset of $\{1,\ldots,n\}$ in which the roles of $a$ and $a+1$ are replaced. That is
\begin{itemize}
\item{} $a\in I$ if and only if $a+1\in \Ia$, and $a+1\in I$ if and only if $a\in \Ia$,
\item{} for $b\not\in \{a, a+1\}$ we have $b\in \Ia$ if and only if $b\in I$.
\end{itemize}

\begin{proposition}
\label{prop:R}
For $1\leq a<k$ we have
\begin{eqnarray}\label{eqn:Ra}
\frac{h}{z_{a+1}-z_a+h} W_I + \frac{z_{a+1}-z_a}{z_{a+1}-z_a+h} W_{\Ia} & = & W_{I}|_{z_a\leftrightarrow z_{a+1}} \nonumber \\
\frac{z_{a+1}-z_a}{z_{a+1}-z_a+h} W_I + \frac{h}{z_{a+1}-z_a+h} W_{\Ia} & = & W_{\Ia}|_{z_a\leftrightarrow z_{a+1}}
\end{eqnarray}
\end{proposition}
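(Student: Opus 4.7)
The plan is to verify the identities by direct computation at the level of the non-symmetrized summand
\[
f_I(t_1,\ldots,t_k)\;=\;\prod_{a=1}^k\Big(A_a(t_a)\prod_{b=a+1}^k\tfrac{t_a-t_b+h}{t_a-t_b}\Big),\qquad
A_a(t_a)=\prod_{u<i_a}(t_a-z_u+h)\prod_{u>i_a}(t_a-z_u),
\]
whose $S_k$-symmetrization gives $W_I/h^k$. Because the involution $I\mapsto\Ia$ swaps the two equations of (\ref{eqn:Ra}) (apply the first to $\Ia$, noting $(\Ia)^{(a,a+1)}=I$), it suffices to prove the first one. The substitution $z_a\leftrightarrow z_{a+1}$ commutes with the $S_k$-action on $t$-variables, so establishing the identity at the $f_I$-level implies it at the $W_I$-level. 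I case-split on $\{a,a+1\}\cap I$.

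\emph{Case A: $|\{a,a+1\}\cap I|=1$.} Say $a\in I$, $a+1\notin I$, and let $i_b=a$, so that in $\Ia$ the $b$-th element is $a+1$. For $c\neq b$ every $t_c$-factor of $f_I$ involves $z_a,z_{a+1}$ either in the symmetric pair $(t_c-z_a)(t_c-z_{a+1})$ (when $i_c<a$) or $(t_c-z_a+h)(t_c-z_{a+1}+h)$ (when $i_c>a+1$), while cross terms are $z$-free. Thus $f_I$ and $f_{\Ia}$ agree outside the single $z_a,z_{a+1}$-dependent factor of $A_b(t_b)$: $f_I$ carries $(t_b-z_{a+1})$ (skipping $u=a$) while $f_{\Ia}$ carries $(t_b-z_a+h)$ (skipping $u=a+1$). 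A one-line calculation gives
\[
\frac{h}{z_{a+1}-z_a+h}(t_b-z_{a+1})+\frac{z_{a+1}-z_a}{z_{a+1}-z_a+h}(t_b-z_a+h)\;=\;t_b-z_a,
\]
which is precisely the $u=a+1$ factor of $A_b(t_b)$ after the swap $z_a\leftrightarrow z_{a+1}$. This proves the first identity of (\ref{eqn:Ra}) at the $f_I$-level. The sub-case $a\notin I$, $a+1\in I$ is analogous.

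\emph{Case B: $|\{a,a+1\}\cap I|\in\{0,2\}$.} Now $\Ia=I$, and both equations collapse to $W_I=W_I|_{z_a\leftrightarrow z_{a+1}}$. If $a,a+1\notin I$, every $t_c$-factor contains the $(z_a,z_{a+1})$-symmetric pair described in Case A, so $f_I$ is already symmetric. If $a,a+1\in I$ with $i_b=a$, $i_{b+1}=a+1$, I factor
\[
f_I\;=\;G(t_1,\ldots,t_k)\cdot(t_b-z_{a+1})(t_{b+1}-z_a+h)\cdot\tfrac{t_b-t_{b+1}+h}{t_b-t_{b+1}},
\]
where $G$ is $(z_a,z_{a+1})$-independent and symmetric in $t_b\leftrightarrow t_{b+1}$. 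Pairing each $\sigma$-summand of $\sym_{S_k}f_I$ with $\sigma\tau$ for $\tau=(b,b+1)$ reduces the symmetry claim to
\[
g(x,y)+g(y,x)\;=\;\frac{(x-q)(y-p+h)(x-y+h)+(y-q)(x-p+h)(x-y-h)}{x-y}
\]
being symmetric in $(p,q)$, where $g(x,y)=(x-q)(y-p+h)\tfrac{x-y+h}{x-y}$ and $(p,q)=(z_a,z_{a+1})$. The $p\leftrightarrow q$ difference factors as $(q-p)(x-y-h)[(x-y+h)+(y-x-h)]$, which vanishes.

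\textbf{Main obstacle.} All cases reduce to elementary algebra; the only conceptually non-obvious step is the sub-case $a,a+1\in I$ of Case B, where the $(z_a,z_{a+1})$-symmetry of $W_I$ is invisible at the $f_I$-level and must be produced by pairing summands through the transposition $(b,b+1)$ and exploiting the cancellation $(x-y+h)+(y-x-h)=0$.
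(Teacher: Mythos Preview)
Your proof is correct and follows essentially the same route as the paper's. The paper treats the case $\Ia=I$ by asserting (without proof) that $W_I$ is then symmetric in $z_a,z_{a+1}$, and handles the case $\Ia\neq I$ by remarking that ``simple manipulations of the formulas'' reduce it to the base case $k=1$, $n=2$, which is checked directly; your Case~A spells out exactly those manipulations (isolating the single $z_a,z_{a+1}$-dependent factor and verifying the resulting linear identity, which is the $k=1,n=2$ computation), and your Case~B supplies the pairing argument the paper omits for the symmetry when $a,a+1\in I$.
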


\begin{proof}
If $\Ia=I$ then the statement reduces to the fact that in this case $W_I$ is symmetric in $z_a$ and $z_{a+1}$. Otherwise the statement of the proposition follows from the special case of $k=1$, $n=2$, $a=1$ by simple manipulations of the formulas. This special case, namely,
\begin{equation}
\begin{pmatrix} \cfrac{h}{z_2-z_1+h} & \cfrac{z_2-z_1}{z_2-z_1+h} \\
\cfrac{z_2-z_1}{z_2-z_1+h} & \cfrac{h}{z_2-z_1+h} \end{pmatrix}
\begin{pmatrix} h(t_1-z_2) \\ h(t_1-z_1+h) \end{pmatrix} =
\begin{pmatrix} h(t_1-z_1) \\ h(t_1-z_2+h) \end{pmatrix},
\end{equation}
follows by direct calculation.
\end{proof}


\newcommand{\e}[1]{ e_{2,1}^{( #1 )} }

Consider the Lie algebra $\gl2$ with its standard generators $e_{i,j}$ for $i,j\in \{1,2\}$. We denote the basis of its vector representation $\C^2$ by $v_+, v_-$. We have $e_{2,1}v_+=v_-$, $e_{2,1}v_-=0$.

For a subset $I \subset \{1,\ldots,n\}$ define $v_I=v_{i_1}\otimes \ldots \otimes v_{i_n}$ where $i_u=-$ for $u\in I$ and $i_u=+$ for $u\not\in I$. The collection of vectors $v_I$ for all subsets $I$ of $\{1,\ldots,n\}$ form a basis of the vector space $(\C^2)^{\otimes n}$. Hence we have the isomorphism of free $\R$-modules

$$
\begin{matrix}
\bigoplus_{k=0}^n M_{k,n} & \cong & (\C^2)^{\otimes n} \otimes \R \\
W_I & \leftrightarrow & v_I
\end{matrix}.
$$

Proposition \ref{prop:R} means that under this identification the following two natural operations on the two sides are identified.
\begin{itemize}
\item{} Substituting $z_a\leftrightarrow z_{a+1}$ on the left-hand side;
\item{} Acting by the `R-matrix'
$$
\frac{ h\cdot \Id + (z_{a+1}-z_a) \cdot \PP^{(a, a+1)}}{z_{a+1}-z_a+h}
$$
on the right-hand side, where $\PP^{(a, a+1)}$ is the transposition of the $a$th and $a+1$st factors of $(\C^2)^{\otimes n}$.
\end{itemize}

\subsection{$Y$-functions}
\label{sec:Y}

The main object of this section is a certain linear combination of the weight functions.
Consider again the $\gl2$ representation $(\C^2)^{\otimes n}$. Let $\e{j}$ denote the action of $e_{2,1}\in\gl2$ on the $j$'th factor of $(\C^2)^{\otimes n}$. Define
$$\Sigma_j=\e{1}+\ldots+\e{j}.$$
For an $I\subset \{1,\ldots,n\}$ recall the notation on the block-structure of $I$ from Section \ref{sec:prelim:subsets} and set
$$\Sigma_I=\left( \frac{1}{m_1!} \Sigma_{v(1)}^{m_1} \right) \left( \frac{1}{m_2!} \Sigma_{v(2)}^{m_2} \right) \ldots
\left( \frac{1}{m_l!} \Sigma_{v(l)}^{m_l} \right) \ \in\ \text{End} ((\C^2)^{\otimes n}).
$$

\begin{definition} \label{def:Y}
For $I\subset \{1,\ldots,n\}$, let $\Sigma_I (v_{\emptyset}) = \sum_{J} c_{J} v_J$ be
the result of application of the operator $\Sigma_I$ to the vector $v_{\emptyset}$. Define
$$Y_I = \sum_J c_J W_J.$$
\end{definition}

\begin{example} \rm
For $k=1$, $i\leq n$ we have $Y_{\{i\}}=\sum_{j=1}^i W_{\{j\}}$.
\par
\noindent For $k=2$, $n=4$ we have
\begin{eqnarray*}
\Sigma_{\{2,4\}}(v_+ \otimes v_+ \otimes v_+ \otimes v_+) & = & \left(( \e{1} + \e{2} )(\e{1}+\e{2}+\e{3}+\e{4})\right) (v_+ \otimes v_+ \otimes v_+ \otimes v_+) \\
& = & 2v_{\{1,2\}} + v_{\{1,3\}} + v_{\{1,4\}} + v_{\{2,3\}} + v_{\{2,4\}},
\end{eqnarray*}
hence
\begin{itemize}
\item $Y_{\{2,4\}}=2W_{\{1,2\}} + W_{\{1,3\}} + W_{\{1,4\}} + W_{\{2,3\}} + W_{\{2,4\}}.$
\end{itemize}
The other $Y$-functions for $k=2$, $n=4$ are
\begin{itemize}
\item{} $Y_{\{1,2\}}=W_{\{1,2\}}$
\item{} $Y_{\{1,3\}}=W_{\{1,2\}} + W_{\{1,3\}}$
\item{} $Y_{\{1,4\}}=W_{\{1,2\}} + W_{\{1,3\}} + W_{\{1,4\}}$
\item{} $Y_{\{2,3\}}=W_{\{1,2\}} + W_{\{1,3\}} + W_{\{2,3\}}$
\item{} $Y_{\{3,4\}}=W_{\{1,2\}} + W_{\{1,3\}} + W_{\{1,4\}} +W_{\{2,3\}}+ W_{\{2,4\}} + W_{\{3,4\}}.$
\end{itemize}

\end{example}

\subsection{Formulas for $Y$-functions}

\begin{lemma} \label{lem:Ylemma}
Using the notation of Section \ref{sec:prelim:subsets} we have
\begin{multline}\label{eqn:Ybadform}
Y_I = \frac{1}{I!}\sym_{S_k} \left[ \prod_{a=1}^k \left( \prod_{u=1}^{i_{\hat{a}}}(t_a-z_u+h)\prod_{b=1}^{a-1} \frac{t_a-t_b-h}{t_a-t_b} -
\prod_{u=1}^{i_{\hat{a}}}(t_a-z_u)\prod_{b=1}^{a-1} \frac{t_a-t_b+h}{t_a-t_b} \right) \prod_{u=i_{\hat{a}}+1}^n (t_a-z_u)\right].
\end{multline}
In addition, let
\begin{equation} \label{eqn:N_Idef}
N_I=\left(\prod_{c>d} \prod_{i_a\in I_c} \prod_{i_b\in I_d} (t_a-t_b-h) \right)\cdot \prod_{a=1}^k \left( \prod_{u\leq i_{\hat{a}}} (t_a-z_u+h) \prod_{u>i_{\hat{a}}}(t_a-z_u)\right).
\end{equation}
Then
\begin{equation} \label{eqn:Ygoodform}
Y_I=\sym_{S_k/S_{\m}} \frac{ N_I(t_1,\ldots,t_k) + M_I(t_1,\ldots,t_k) }{\prod_{c>d} \prod_{i_a\in I_c} \prod_{i_b\in I_d} (t_a-t_b) } ,
\end{equation}
where $M$ is a polynomial symmetric in the groups of variables (\ref{eqn:vars}) and $M(z_{\sigma(1)},\ldots,z_{\sigma(k)})=0$ for any permutation $\sigma\in S_k$.
\end{lemma}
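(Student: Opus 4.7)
The lemma contains two formulas: \eqref{eqn:Ybadform} is the main combinatorial identity, and \eqref{eqn:Ygoodform} is a rewriting that isolates a canonical polynomial numerator $N_I$ from a remainder $M_I$ vanishing on the $S_k$-orbit of $(z_1,\ldots,z_k)$. My plan is to establish \eqref{eqn:Ybadform} first and then derive \eqref{eqn:Ygoodform} by a symmetrization rearrangement.

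For \eqref{eqn:Ybadform}: since the operators $\e{j}$ commute and satisfy $(\e{j})^2=0$, we have $\tfrac{1}{m!}\Sigma_j^m=\sum_{L\subset\{1,\ldots,j\},\,|L|=m}\prod_{l\in L}\e{l}$, whence
$$
Y_I \;=\; \sum_{(L_1,\ldots,L_l)} W_{L_1\sqcup\cdots\sqcup L_l},
$$
summed over $l$-tuples of pairwise disjoint subsets $L_c\subset\{1,\ldots,i_{v(c)}\}$ with $|L_c|=m_c$; this gives the multiplicities $c_J$ of $W_J$ explicitly. Calling the right-hand side of \eqref{eqn:Ybadform} $\widetilde{Y}_I$, both $Y_I$ and $\widetilde{Y}_I$ lie in the free $\R$-module $M_{k,n}$ of Lemma \ref{lem:free}, and their $W_J$-coefficients can be read off inductively in the partial order $\ge$ on $J$ by specializing $t_a=z_{j_a}$ (Lemma \ref{lem:interpolation}). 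Under such a specialization, the summand of $\sym_{S_k}\widetilde{Y}_I$ indexed by $\sigma$ is killed by the factor $\prod_{u>i_{\hat a}}(t_a-z_u)$ unless $j_{\sigma(a)}\le i_{\hat a}$ for every $a$; the set of surviving $\sigma$'s is in $I!$-to-$1$ correspondence with valid decompositions $(L_c)$ of $J$, matching the definition of $c_J$. A direct evaluation of the surviving contributions (now residues of the rational prefactors) combined with the triangular structure of $\{W_J|_{t=z_J}\}$ upgrades the pointwise match to an equality in $M_{k,n}$.

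For \eqref{eqn:Ygoodform}: I write $\tfrac{1}{I!}\sym_{S_k}=\sym_{S_k/S_\m}\tfrac{1}{\prod m_c!}\sym_{S_\m}$ and push the inner $\sym_{S_\m}$ through \eqref{eqn:Ybadform}. Within-block symmetrization converts the within-block rational prefactors $\prod \tfrac{t_a-t_b\pm h}{t_a-t_b}$ (over pairs $b<a$ with $i_a,i_b$ in the same block) into polynomial contributions, via identities such as $\sym_{S_m}\prod_{i<j}\tfrac{t_i-t_j+h}{t_i-t_j}=m!$, clearing all within-block denominators. Only the cross-block denominators $\prod_{c>d}\prod_{i_a\in I_c,\,i_b\in I_d}(t_a-t_b)$ remain, and the $S_\m$-symmetric polynomial numerator decomposes naturally as $N_I+M_I$, where $N_I$ (see \eqref{eqn:N_Idef}) is the contribution in which the $+h$ branch $A_a$ is chosen in every slot together with the cross-block $-h$ factors $(t_a-t_b-h)$. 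The vanishing $M_I(z_{\sigma(1)},\ldots,z_{\sigma(k)})=0$ is verified by specializing both sides of \eqref{eqn:Ygoodform} at $t_a=z_{\sigma(a)}$: Lemma \ref{lem:interpolation} determines $Y_I$ at these points, a direct check shows $N_I/\prod_{c>d}\prod(t_a-t_b)$ specializes to the same value, and so $M_I$ vanishes on the $S_k$-orbit.

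The main obstacle is the pole-and-residue bookkeeping in \eqref{eqn:Ybadform}: the $k!$ summands of $\sym_{S_k}\widetilde{Y}_I$ have simple poles along $t_a=t_b$ that cancel pairwise, and tracking the surviving residues under the specializations $t_a=z_{j_a}$ requires care. Once this is done, the symmetrization identities used in \eqref{eqn:Ygoodform} are routine, with the only additional work being the identification of $N_I$ as the correct principal part by a degree-and-block-order argument.
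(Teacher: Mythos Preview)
Your plan for (\ref{eqn:Ybadform}) has a genuine gap. You assert that the right-hand side $\widetilde Y_I$ lies in the free module $M_{k,n}$ spanned by the $W_J$, and then propose to match $Y_I$ and $\widetilde Y_I$ via the specializations $t_a=z_{j_a}$ together with the triangularity of Lemma~\ref{lem:interpolation}. But membership of $\widetilde Y_I$ in $M_{k,n}$ is exactly the nontrivial content here: both sides are $S_k$-symmetric polynomials of degree $\le n-1$ in each $t_a$, and that space has $\R$-rank $\binom{n+k-1}{k}$, strictly larger than $\binom{n}{k}$ once $k\ge 2$. So agreement at the $\binom{n}{k}$ interpolation nodes does not force equality, and the triangular recovery of coefficients only makes sense \emph{after} one knows the function is a combination of the $W_J$. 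The paper does not prove this step from scratch either; it invokes Lemma~2.21 of \cite{TV2}, which is precisely the identity writing $\widetilde Y_I$ in the $W_J$-basis. Your proposal supplies no independent argument for it.

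For (\ref{eqn:Ygoodform}) your outline parallels the paper's in writing $\frac{1}{I!}\sym_{S_k}=\sym_{S_k/S_\m}\circ\frac{1}{I!}\sym_{S_\m}$ and in identifying $N_I$ as the ``all first-branch'' contribution, but your verification that $M_I(z_{\sigma(1)},\ldots,z_{\sigma(k)})=0$ does not work as stated. Specializing the equation (\ref{eqn:Ygoodform}) at $t_a=z_{\sigma(a)}$ yields only the $S_k$-symmetric quantity $Y_I(z_1,\ldots,z_k)$, which is a \emph{sum} over $S_k/S_\m$-coset representatives of $(N_I+M_I)/\!\prod(t_a-t_b)$; you cannot isolate the individual values $M_I(z_\sigma)$ from a single such evaluation. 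The paper's argument is the direct observation you are missing: the second term of each factor $Q_a$ equals $-\prod_{u=1}^{n}(t_a-z_u)\cdot\prod_{b<a}\frac{t_a-t_b+h}{t_a-t_b}$, visibly divisible by $\prod_{u=1}^{n}(t_a-z_u)$. Hence each of the $2^k-1$ mixed terms in the expansion of $\prod_a Q_a$ carries such a factor for some index, and after $S_\m$-symmetrization its polynomial numerator is a sum of terms each divisible by some $\prod_{u=1}^{n}(t_b-z_u)$; this vanishes at \emph{every} substitution $t_a\mapsto z_{\sigma(a)}$, $\sigma\in S_k$, with no comparison to $Y_I$ required.
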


\begin{proof}
Formula (\ref{eqn:Ybadform}) is a corollary of Lemma 2.21 in \cite{TV2}.

\smallskip

Now consider the function $F$ in $[ \ \ ]$-brackets in (\ref{eqn:Ybadform}).
Our goal is to write its $S_{\m}$ symmetrization in the form
$$\sym_{S_{\m}} F = I! \cdot \frac{ N_I + M_I }{\prod_{c>d} \prod_{i_a\in I_c}
\prod_{i_b\in I_d} (t_a-t_b) }$$ with $N_I$ and $M_I$ having the required properties.

The function $F$ is a product of $k$ factors $Q_1,\ldots,Q_k$, with each factor being a difference of two rational functions. For example the first such factor is
$$ Q_1=\prod_{u=1}^{i_{\hat{1}}}(t_1-z_u+h)\prod_{u=i_{\hat{1}}+1}^n (t_1-z_u) - \prod_{u=1}^{n}(t_1-z_u),$$
and the last such factor is
$$ Q_k=\prod_{u=1}^{i_{\hat{k}}}(t_k-z_u+h)\prod_{u=i_{\hat{k}}+1}^n(t_k-z_u) \prod_{b=1}^{k-1} \frac{t_k-t_b-h}{t_k-t_b} - \prod_{u=1}^{n}(t_k-z_u)\prod_{b=1}^{k-1} \frac{t_k-t_b+h}{t_k-t_b}.$$
When this product of $k$ factors is distributed, we have $2^k$ terms. Each of these terms are of the form
$$\frac{p(t_1,\ldots,t_k)}{\prod_{1\leq b\leq a\leq k} (t_a-t_b)},$$
where $p$ is a polynomial. Hence, the $S_{\m}$ symmetrization of this term is of the form
$$\frac{q(t_1,\ldots,t_k)}{\prod_{c>d} \prod_{i_a\in I_c} \prod_{i_b\in I_d} (t_a-t_b)}$$
for an appropriate polynomial $q$.

We first claim that $q$ satisfies $q(z_{\sigma(1)},\ldots,z_{\sigma(k)})=0$ for any permutation $\sigma\in S_k$, unless the term we are considering comes from the choice of choosing the {\sl first term from each $Q_1,Q_2,\ldots,Q_k$}. Indeed, the second term of $Q_a$ is divisible by
$\prod_{u=1}^n (t_a-z_u)$. Hence $p$ is divisible by this. Therefore $q$ can be written as a sum of terms, each having a factor
$\prod_{u=1}^n (t_b-z_u)$ (for different $b$'s). This proves our first claim. The sum of the $q$ polynomials corresponding to these $2^k-1$ choices will be $I!\cdot M_I$.

Now consider the product of the first terms of $Q_1, \ldots, Q_k$. It is
$$\prod_{a=1}^k \left( \prod_{u=1}^{i_{\hat{a}}} (t_a-z_u+h) \prod_{u=i_{\hat{a}}+1}^n (t_a-z_u) \right) \prod_{1\leq b\leq a\leq k} \frac{t_a-t_b-h}{t_a-t_b}.$$
Its $S_{\m}$ symmetrization is equal
$$\prod_{a=1}^k \left( \prod_{u=1}^{i_{\hat{a}}} (t_a-z_u+h) \prod_{u=i_{\hat{a}}+1}^n (t_a-z_u) \right) \sym_{S_{\m}} \prod_{1\leq b\leq a\leq k} \frac{t_a-t_b-h}{t_a-t_b}.$$
Observe that
\begin{equation}\label{eqn:sym1}
\sym_{S_{\m}} \prod_{1\leq b\leq a\leq k} \frac{t_a-t_b-h}{t_a-t_b}=\prod_{c>d} \prod_{i_a\in I_c} \prod_{i_b\in I_d} \frac{t_a-t_b-h}{t_a-t_b} \cdot
\sym_{S_{\m}} \prod_{\stackrel{a>b}{i_{\hat{a}}=i_{\hat{b}}}} \frac{t_a-t_b-h}{t_a-t_b}.
\end{equation}
Applying the simple identity
$$\sym_{S_r} \prod_{1\leq b\leq a\leq r} \frac{t_a-t_b-h}{t_a-t_b}=r!$$
for $r=m_1, m_2, \ldots$, we find that (\ref{eqn:sym1}) equals
$$\prod_{c>d}\prod_{i_a\in I_c} \prod_{i_b\in I_d} \frac{t_a-t_b-h}{t_a-t_b} \cdot I!.$$
Hence the $q$ polynomial corresponding to the choice of the first factors from $Q_1,\ldots, Q_k$ is $I!\cdot N_I$. This proves the second part of the Lemma.
\end{proof}

\section{Geometry: conormal bundles of Schubert varieties}\label{sec:geo}

\subsection{The Schubert variety and its resolution}


Let $\epsilon_1,\ldots,\epsilon_n$ be the standard basis of $\C^n$. Consider the Grassmannian $\Gr=\Gr_k\!\C^n$ of $k$ dimensional subspaces of $\C^n$, and the standard flag
\begin{equation}
\C^1\subset \C^2\subset \ldots \subset \C^n,
\end{equation}
where $\C^a=\spa(\epsilon_1,\ldots,\epsilon_a)$. For $I=\{i_1<\ldots <i_k\} \subset \{1,\ldots, n\}$
we define the Schubert variety
\begin{equation*}
S_I=\{W^k \subset \C^n : \dim (W^k \cap \C^{i_a})\geq a\ \text{for}\ a=1,\ldots,k\}\subset \Gr.
\end{equation*}

The dimension of $S_I$ is $\ell(I)=\sum_{a=1}^k (i_a-a)$.
Recall the notations of the block-structure of $I$ from Section \ref{sec:prelim:subsets}. In particular, there are indexes $v(1), \ldots, v(l)$  determined by $I$. The definition of $S_I$ can be rephrased (see e.g. \cite[Sect. 3.2]{manivel}) as
\begin{equation*}
S_I=\{W^k \subset \C^n : \dim (W^k \cap \C^{i_{v(c)}})\geq v(c)\ \text{for}\ c=1,\ldots,l\}\subset \Gr.
\end{equation*}
Consider the partial flag variety $\Fl=\Fl_{{v(1)}, {v(2)},\ldots, {v(l)}}\C^n$ of nested subspaces
$$L_1\subset L_2 \subset \ldots \subset L_l$$
of $\C^n$, where $\dim L_c=v(c)$. Note that $v(1),\ldots,v(l)$ depend on $I$, hence $\Fl$ depends on $I$. The natural forgetful map $\Fl\to \Gr$, $(L_1\subset\ldots\subset L_l) \mapsto L_l$ will be denoted by $\pi$. The Schubert variety
\begin{equation}
\tilde{S}_I=\{ (L_1\subset \ldots \subset L_l) : L_c \subset \C^{i_{v(c)}}\ \text{for}\ c=1,\ldots,l\} \subset \Fl
\end{equation}
is a resolution of $S$ through the map $\pi$ \cite[Section 3.4]{manivel}.

\smallskip

Since $\tilde{S}_I$ is a smooth subvariety of $\Fl$, we can consider its conormal bundle
\begin{equation}\label{eqn:def_conormal}
C\!\tilde{S}_I=\{\alpha\in T_p^*\Fl : p\in \tilde{S}_I, \alpha(T_p\tilde{S}_I)=0\} \subset T^*\!\Fl.
\end{equation}
It is a rank $\dim \Fl - \dim \tilde{S}_I$ subbundle of the cotangent bundle of $\Fl$ restricted to $\tilde{S}_I$, hence $\dim C\!\tilde{S}_I=\dim \Fl$. It is well known that $C\!\tilde{S}_I$ is a Lagrangian subvariety of $T^*\!\Fl$ with its usual symplectic form.

\subsection{Torus equivariant cohomology}
Consider the torus ${\T}=(\C^*)^n\times \C^*$. Let $L_i$ be the tautological line bundle over the $i$'th component of the classifying space $B\T=\left({\mathbb P}^{\infty}\right)^{n}\times {\mathbb P}^{\infty}$. We define $z_i=c_1(L_i)$ for $i=1,\ldots,n$, and $h=c_1(L_{n+1})$. We have
$H^*_{\T}(\text{one-point space})=H^*(B\T)=\C[z_1,\ldots,z_n,h]$. The $\T$ equivariant cohomology ring of any $\T$ space is a module over this polynomial ring.

Consider the action of $\T$ on $\C^n$ given by $(\alpha_1,\ldots,\alpha_n)\cdot (v_1,\ldots,v_n)=(\alpha_1v_1,\ldots,\alpha_nv_n)$.
This action induces
an action of ${\T}$ on $\Gr$ and $\Fl$. We will be concerned with the action of ${\T}$ on the cotangent bundles $T^*\!\Gr$ and $T^*\!\Fl$ defined as follows: the action of the subgroup $(\C^*)^n$ is induced from the action of ${\T}$ on $\Gr$ and $\Fl$, while the extra $\C^*$ factor acts by multiplication in the fiber direction. We have the diagram of maps in ${\T}$-equivariant cohomology
\begin{equation*}
\xymatrix{
H^*_{\T}(\Fl) \ar[d]_{\pi_*} & \cong & H^*_{\T}(T^*\!\Fl) \\
H^*_{\T}(\Gr) & \cong & H^*_{\T}( T^*\!\Gr) }
\end{equation*}
where $\pi_*$ is the push-forward map (also known as Gysin map) in cohomology. The isomorphisms $H^*_{\T}(X)\cong H^*_{\T}(T^*X)$ are induced by the equivariant homotopy equivalences $T^*X\to X$. In notation we will not distinguish between $H^*_{\T}(X)$ and $H^*_{\T}(T^*X)$, in particular $\pi_*$ will denote the map
$H^*_{\T}(T^*\!\Fl) \to H^*_{\T}( T^*\!\Gr)$ as well.

\medskip

There are natural bundles over $\Gr$, whose fibers over the point $W\in \Gr$ are $W, \C^n/W$. Let the Chern roots of these bundles be denoted by
\begin{equation} \label{eqn:t1}
\underbrace{t_1,\ldots, t_k}_{k}, \qquad \underbrace{\tilde{t}_1,\ldots,\tilde{t}_{n-k}}_{n-k}.
\end{equation}
Let the group $S_k$ act by permuting the $t_a$'s, and $S_{n-k}$ by permuting the $\tilde{t}_b$'s. We have
$$H^*_{\T}(\Gr)=\C[t_1,\ldots,t_k,\tilde{t}_1,\ldots,\tilde{t}_{n-k},z_1,\ldots,z_n,h]^{S_k\times S_{n-k}}/I_{\Gr},$$
where the ideal $I_{\Gr}$ is generated by the coefficients of the following polynomial in $\xi$:
$$\prod_{a=1}^k (1+t_a \xi) \prod_{b=1}^{n-k} (1+\tilde{t}_b\xi) - \prod_{u=1}^n (1+z_u \xi).$$

There are natural bundles over $\Fl$, whose fibers over the point $(L_1,L_2,\ldots,L_l)\in \Fl$ are $L_1$, $L_2/L_1$, $L_3/L_2, \ldots, L_l/L_{l-1}$, $\C^n/L_l$. Let the Chern roots of these bundles be denoted by
\begin{equation} \label{eqn:t}
\underbrace{t_1,\ldots, t_{v(1)}}_{m_1}, \qquad \underbrace{t_{v(1)+1},\ldots,t_{v(2)}}_{m_2}, \qquad \ldots \qquad \underbrace{t_{v(l-1)+1},\ldots,t_{v(l)}}_{m_l}, \qquad \underbrace{\tilde{t}_1,\ldots,\tilde{t}_{n-k}}_{n-k}.
\end{equation}

Recall that $S_{\m}$ permutes the variables $t_a$ as in Section \ref{sec:prelim:sym}, $S_{n-k}$ permutes the variables $\tilde{t}_b$. We have
\begin{equation} \label{eqn:H_of_Fl}
H^*_{\T}(\Fl)=\C[t_1,\ldots,t_k,\tilde{t}_1,\ldots,\tilde{t}_{n-k},z_1,\ldots,z_n,h]^{S_{\m}\times S_{n-k}}/I_{\Fl},
\end{equation}
where the ideal $I_{\Fl}$ is generated by the coefficients of the following polynomial in $\xi$:
$$\prod_{a=1}^k (1+t_a \xi) \prod_{b=1}^{n-k} (1+\tilde{t}_b\xi) - \prod_{u=1}^n (1+z_u \xi).$$

\smallskip


\subsection{Equivariant localization} \label{sec:localization}
We recall some facts from the theory of equivariant localization, specified for $\Fl$ and $\Gr$.
Our reference is \cite{AB}, a more recent account is e.g. \cite[Ch. 5]{ginzburg}.
Let $\FF_{\Fl}$ be the set of fixed points of the ${\T}$ action on $\Fl$, and
let $\FF_{\Gr}$ be the set of fixed points of the ${\T}$ action on $\Gr$. The sets $\FF_{\Fl}$, $\FF_{\Gr}$ are finite.
Consider the restriction maps
\begin{equation}\label{diag:rest_vs_pi}
H^*_{\T}(\Fl) \to H^*_{\T}(\FF_{\Fl}) = \bigoplus_{f\in \FF_{\Fl}} H^*_{\T}(f), \qquad
H^*_{\T}(\Gr) \to H^*_{\T}(\FF_{\Gr}) = \bigoplus_{f\in \FF_{\Gr}} H^*_{\T}(f).
\end{equation}
A key fact of the theory of equivariant localization is that these restriction maps are injective.
The explicit form of these restriction maps is as follows.
\begin{itemize}
\item{} A fixed point $f\in \FF_{\Gr}$ is a coordinate $k$-plane in $\C^n$, hence it corresponds to a subset $I\subset \{1,\ldots,n\}$, $|I|=k$. The restriction map $H_{\T}^*(\Gr)\to H^*_{\T}(f)=\C[z]$ to the fixed point corresponding to $I$ is
$$[p(t, \tilde{t},z,h)] \mapsto p(z_I, z_{\bar{I}}, z,h),$$
where $t=(t_1,\ldots,t_k)$, $\tilde{t}=(\tilde{t}_1,\ldots, \tilde{t}_{n-k})$, $z=(z_1,\ldots,z_n)$; and $z_I$ stands for the list of $z_u$'s with $u\in I$; and $z_{\bar{I}}$ stands for the list of $z_u$'s with $u\not\in I$.
\item{} Recall that $\Fl$ depends on $I$, and the block structure of $I$ determines numbers $l, m_c$ as in Section \ref{sec:prelim:subsets}.
A fixed point $f\in \Fl$ corresponds to a decomposition $\{1,\ldots,n\}=K_1 \cup \ldots \cup K_l \cup K_{l+1}$ into disjoint subsets $K_1,\ldots, K_l, K_{l+1}$ with $|K_c|=m_c$ for $c=1,\ldots,l$, and $|K_{l+1}|=n-k$. The restriction map $H_{\T}^*(\Fl)\to H^*_{\T}(f)=\C[z]$ to the fixed point corresponding to this decomposition is
$$[p(t, \tilde{t},z,h)] \mapsto p(z_{K_1}, \ldots, z_{K_l}, z_{K_{l+1}}, z,h).$$
\end{itemize}

We will also need a formula for the push-forward map $\pi_*$.

\begin{proposition} Let $[p]\in H^*_{\T}(\Fl)$, where
$p\in\C[t_1,\ldots,t_k,\tilde{t}_1,\ldots,\tilde{t}_{n-k},z_1,\ldots,z_n]^{S_{\m}\times S_{n-k}}$ (see (\ref{eqn:H_of_Fl})). Then
\begin{equation} \label{eqn:int}
\pi_*\left( [p] \right) = \left[ \sym_{S_k/S_{\m}} \frac{p}
{\prod_{c>d} \prod_{t_a\in I_c} \prod_{t_b \in I_d} (t_a-t_b)} \right].
\end{equation}
\end{proposition}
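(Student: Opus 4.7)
The plan is to verify (\ref{eqn:int}) via equivariant localization: since the restriction map $H^*_{\T}(\Gr) \to \bigoplus_{J} H^*_{\T}(f_J)$ to the fixed points $f_J = \spa(e_u : u \in J)$ is injective, it suffices to show that both sides restrict to the same element of $\C[z_1,\ldots,z_n,h]$ at every $f_J$ with $J \subset \{1,\ldots,n\}$, $|J|=k$.

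For the left-hand side, I would apply the Atiyah--Bott--Berline--Vergne formula fiberwise to the smooth proper map $\pi$. The $\T$-fixed points of $\Fl$ lying over $f_J$ are in bijection with ordered set partitions $J = K_1 \sqcup \cdots \sqcup K_l$ satisfying $|K_c| = m_c$. At the fixed point $g_{K_\bullet}$ corresponding to $(K_1,\ldots,K_l)$, the vertical tangent space decomposes torus-equivariantly as $\bigoplus_{c<d}\bigoplus_{u \in K_c,\, v \in K_d} \Hom(\langle e_u\rangle,\langle e_v\rangle)$, with equivariant Euler class $\prod_{c<d}\prod_{u\in K_c, v\in K_d}(z_v - z_u) = \prod_{c>d}\prod_{u\in K_c, v\in K_d}(z_u - z_v)$. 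Hence
$$
\pi_*([p])\big|_{f_J} \;=\; \sum_{J = K_1 \sqcup \cdots \sqcup K_l} \frac{p(z_{K_1},\ldots,z_{K_l},z_{J^c},z,h)}{\prod_{c>d}\prod_{u\in K_c,\, v\in K_d}(z_u - z_v)},
$$
where $p(z_{K_1},\ldots)$ denotes the substitution $t_a \mapsto z_u$ with $a$ ranging over the $c$-th index block and $u$ ranging (in any order) over $K_c$; this is well defined precisely because $p$ is $S_{\m}$-symmetric.

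For the right-hand side, I would unfold $\sym_{S_k/S_{\m}} = \tfrac{1}{\m!}\sym_{S_k}$ and restrict to $f_J$ via $t_a \mapsto z_{j_a}$, where $J = \{j_1 < \cdots < j_k\}$. For each $\sigma \in S_k$ set $K_c(\sigma) = \{j_{\sigma(a)} : v(c-1) < a \leq v(c)\}$; then both the numerator (by $S_{\m}$-symmetry of $p$) and the denominator of the $\sigma$-term depend only on the ordered partition $(K_1(\sigma),\ldots,K_l(\sigma))$. Exactly $\m! = m_1!\cdots m_l!$ permutations produce each ordered partition, so the factor $1/\m!$ cancels the overcounting and the sum collapses to precisely the ABBV expression above.

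The only real obstacle is bookkeeping: reading ``$t_a \in I_c$'' correctly as the condition ``$a$ lies in the $c$-th block of indices'', tracking the sign flip from $\prod_{c<d}(z_v-z_u)$ to $\prod_{c>d}(z_u-z_v)$, and confirming that $S_{\m}$-symmetry of $p$ makes the substitution $p(z_{K_1},\ldots)$ unambiguous. Once these conventions are pinned down, the term-by-term match between the two fixed-point formulas is immediate.
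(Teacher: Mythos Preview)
Your proposal is correct and follows essentially the same approach as the paper: both verify the identity by restricting to each torus fixed point $f_J$ of $\Gr$, compute $\pi_*([p])|_{f_J}$ via the Atiyah--Bott localization formula on the fiber $\pi^{-1}(f_J)$ as a sum over ordered partitions $J=K_1\sqcup\cdots\sqcup K_l$, and observe that this sum coincides with the restriction of the symmetrized expression on the right. Your write-up is in fact more explicit than the paper's about why the right-hand side yields the same sum (the $S_{\m}$-coset counting), whereas the paper simply asserts that the restriction ``is clearly the same formula.''
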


The expression on the right-hand side is formally a sum of fractions. However, in the sum the denominators cancel: the sum is a polynomial.

\begin{proof} We sketch a proof of this formula well known in localization theory. Let $f_I$ be the fixed point on $\Gr$ corresponding to the subset $I=\{i_1,\ldots,i_k\}\subset \{1,\ldots,n\}$. We have
\begin{equation}\label{eqn:last}
\pi_*([p])|_{f_I} = \left(\pi|_{\pi^{-1}(f_I)} \right)_* \left([p]_{\pi^{-1}(f_I)}\right)=
\left(\pi|_{\pi^{-1}(f_I)} \right)_*\left( \sum_{f\in \pi^{-1}(f_I) \cap \FF_{\Fl}} \frac{j_{f*} ( [p]_{f} )}{e(T_{f}\pi^{-1}(f))}\right).
\end{equation}
Here $j_{f}$ is the inclusion $\{f\} \subset \pi^{-1}(f_I)$. In the last equality we used the formula \cite[p.9.]{AB} which describes how one can recover an equivariant cohomology class from its restrictions to the fixed points. Using the fact that $(\pi|_{\pi^{-1}(f_I)} \circ j_{f})_*:\Z[z,h]\to \Z[z,h]$ is the identity map, we obtain that (\ref{eqn:last}) is equal to
$$\left.\left[ \sym_{S_k/S_{\m}} \frac{p}{\prod_{c>d} \prod_{t_a\in I_c} \prod_{t_b \in I_d} (t_a-t_b)} \right]\right|_{t_a=z_{i_a}, \tilde{t}=z_{\bar{I}}.}$$
The restriction of the right-hand-side of (\ref{eqn:int}) to $f_I$ is clearly the same formula. This proves the proposition.
\end{proof}

\subsection{Equivariant fundamental class of the conormal bundle on $\Fl$}\label{sec:fundamental}

The variety $C\!\tilde{S}_I$ is invariant under the ${\T}$
	action on $T^*\!\Fl$. Hence it has an equivariant
	fundamental cohomology class,
	an element $[C\!\tilde{S}_I]\in H_{\T}^{2\dim \Fl}(T^*\!\Fl)=H_{\T}^{2\dim \Fl}(\Fl)$.
Our goal in this section is to express the geometrically defined class $[C\!\tilde{S}_I]$ in terms of the Chern roots $z_u$, $h$, and $t_a$.

Let $\tilde{\gamma}$ be the bundle over $\Fl$ whose fiber over $(L_1,\ldots,L_l)$ is $L_l$.
Consider the ${\T}$ action on the bundle $\tilde{\gamma}^*\otimes \tilde{\gamma}$ where the $(\C^*)^n$ action is induced by the action on $\Fl$, and the extra $\C^*$ acts by multiplication in the fiber direction. The Euler class of this bundle will be denoted by $e_h(\tilde{\gamma}^* \otimes \tilde{\gamma})$. We have
$$e_h(\tilde{\gamma}^* \otimes \tilde{\gamma})=\prod_{a=1}^k \prod_{b=1}^k (t_a-t_b+h).$$

\begin{theorem}
\label{thm:NI_geometry}

For $I\subset \{1,\ldots,n\}$, $|I|=k$ we use the notations of Section \ref{sec:prelim:subsets}.
Let
$$\sgn(I)=(-1)^{\codim S_I}=(-1)^{k(n-k)-\ell(I)}=(-1)^{\sum_{a=1}^k (n-i_{\hat{a}})-\sum_{1\leq c< d \leq l}  m_cm_d}.$$
Consider the cohomology class
\begin
{equation}\label{eqn:N_again}
N_I=\left(\prod_{c>d} \prod_{i_a\in I_c} \prod_{i_b\in I_d} (t_a-t_b-h) \right) \cdot
\prod_{a=1}^k \left( \prod_{u=1}^{i_{\hat{a}}} (t_a-z_u+h) \prod_{u=i_{\hat{a}}+1}^{n} (t_a-z_u) \right) \in H_{\T}^*(\Fl).
\end{equation}
Then
\begin{equation} \label{eqn:NI}
\sgn(I)\cdot e_h(\tilde{\gamma}^* \otimes \tilde{\gamma}) \cdot [C\!\tilde{S}_I] =N_I \qquad\qquad \in H_{\T}^*(\Fl).
\end{equation}
\end{theorem}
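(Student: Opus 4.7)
The plan is to verify the identity (\ref{eqn:NI}) by equivariant localization. By the injectivity of the restriction map $H^*_{\T}(\Fl)\to\bigoplus_{f\in \FF_{\Fl}}H^*_{\T}(f)$ (Section \ref{sec:localization}), it suffices to check that both sides agree after restriction to each $\T$-fixed point $f=(K_1,\ldots,K_{l+1})$ of $\Fl$.

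First I will handle fixed points $f\notin\tilde S_I$. Such an $f$ violates $K_c\subset\{1,\ldots,i_{v(c)}\}$ for some $c$, so there is $k\in K_c$ with $k>i_{v(c)}=i_{\hat a}$ (where $a$ is any index with $i_a\in I_c$). The right-hand side $N_I|_f$ vanishes because the factor $\prod_{u>i_{\hat a}}(t_a-z_u)$ in (\ref{eqn:N_again}) evaluates a term to $z_k-z_k=0$. The left-hand side vanishes because $\T$-fixed points of $C\tilde S_I$ in $T^*\!\Fl$ lie on the zero section (the extra $\C^*$ scales fibers), so $\FF_{C\tilde S_I}=\FF_{\tilde S_I}$, and $f\notin\tilde S_I$ forces $[C\tilde S_I]|_f=0$.

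Next, for $f\in\tilde S_I$, since $\tilde S_I$ is smooth in $\Fl$, at the zero-section fixed point $f$ the normal bundle of $C\tilde S_I$ in $T^*\!\Fl$ splits as $N_{\tilde S_I/\Fl}|_f\oplus T^*_f\tilde S_I$, where the second summand has its weights shifted by $+h$ due to the fiber-scaling action. Hence
$$[C\tilde S_I]|_f=\prod_{\alpha\text{ normal}}\alpha\cdot\prod_{\beta\text{ tangent}}(h-\beta).$$
A direct inspection of deformations shows that the $T_f\Fl$-weight $z_v-z_u$ (with $u\in K_c$, $v\in K_d$, $c<d$) is tangent to $\tilde S_I$ iff $v\le i_{v(c)}$; this gives $\dim\tilde S_I=\ell(I)$ as a consistency check. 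Multiplying by $e_h(\tilde\gamma^*\otimes\tilde\gamma)|_f=\prod_{u,v\in K}(z_u-z_v+h)$ produces an explicit product. On the other side, $N_I$ is $S_{\m}$-symmetric, and substituting $t_a\mapsto z_u$ via any bijection between $\{a:i_a\in I_c\}$ and $K_c$ gives $N_I|_f$ as a similarly explicit product of three groups of linear factors.

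The bulk of the argument is then a bookkeeping comparison: I organize factors by ordered pairs $(u,w)\in K\times\{1,\ldots,n\}$ and check that they agree up to a uniform sign. The analysis splits into cases according to whether $w$ lies in the same block as $u$, an earlier block, a later block (with $w\le i_{v(c(u))}$ or $w>i_{v(c(u))}$), or in $K_{l+1}$. In most cases the two sides match identically; sign discrepancies arise only from (i) unordered pairs $\{u,w\}\subset K$ with $c(u)<c(w)$ and $w\le i_{v(c(u))}$, and (ii) ordered pairs $(u,w)$ with $u\in K$, $w\in K_{l+1}$, $w>i_{v(c(u))}$. The expected obstacle is showing that the total count of these two types of pairs has the same parity as $\codim S_I=k(n-k)-\ell(I)$. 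This reduces, by elementary counting using $i_{v(c)}=v(c)+|\{w\in\bigcup_{d>c}K_d:w\le i_{v(c)}\}|+|K_{l+1}\cap\{1,\ldots,i_{v(c)}\}|$, to the identity
$$\#\text{(ii)}=\codim S_I+\#\text{(i)},$$
so the combined sign mismatch is exactly $(-1)^{\codim S_I}=\sgn(I)$. Multiplying by $\sgn(I)$ in (\ref{eqn:NI}) absorbs this, completing the verification at every fixed point and hence in $H^*_{\T}(\Fl)$.
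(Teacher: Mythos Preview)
Your proof is correct and follows essentially the same route as the paper: both verify (\ref{eqn:NI}) by equivariant localization, checking that the two sides agree at every $\T$-fixed point of $\Fl$, handling separately the cases $f\notin\tilde S_I$ (both sides vanish) and $f\in\tilde S_I$ (explicit weight computation). The geometric input---identifying the normal bundle of $C\tilde S_I$ at a zero-section point as $N_{\tilde S_I/\Fl}\oplus T^*\tilde S_I$ with the $h$-shift on the cotangent summand---is exactly what the paper uses to obtain $[C\tilde S_I]|_f$.

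The only real difference is in the bookkeeping of the final comparison at $f\in\tilde S_I$. The paper proceeds by direct algebraic manipulation: it rewrites $N_I|_{t_a=z_{j_a}}$ in three displayed steps, absorbing the ``block'' factors $\prod_{c>d}(z_{j_a}-z_{j_b}-h)$ into a full $\prod_{a,b}(z_{j_a}-z_{j_b}+h)$ (picking up the sign $(-1)^{\sum_{c<d}m_cm_d}$), and then reads off the residual product as $e_h|_f$ times $[C\tilde S_I]|_f$ up to the further sign $(-1)^{\sum_a(n-i_{\hat a})}$ coming from $\prod(z_{j_a}-z_u)$ versus $\prod(z_u-z_{j_a})$. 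Your approach instead organizes all linear factors by pairs $(u,w)$ and isolates exactly where signs disagree, then counts those discrepancies combinatorially. Both routes land on $\sgn(I)$; the paper's three-line rewrite is shorter, while your pairwise accounting makes the origin of each sign contribution more transparent. A small notational point: you reuse $k$ for an element of $K_c$, which clashes with $k=|I|$; use a different letter there.
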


Note that expressions (\ref{eqn:N_Idef}) and (\ref{eqn:N_again}) coincide. However, in Lemma \ref{lem:Ylemma},\ $N_I$ is a polynomial, while in Theorem \ref{thm:NI_geometry},\ $N_I$ is a cohomology class (that is, an element of the quotient ring (\ref{eqn:H_of_Fl})).

Note also, that the element $e_h(\tilde{\gamma}^* \otimes \tilde{\gamma})$ is not a zero-divisor in the ring $H_{\T}^*(\Fl)$ (because none of its fixed point restrictions vanish), hence equation (\ref{eqn:NI}) uniquely determines $[C\!\tilde{S}_I]$.

\begin{proof} We will show that the restrictions of the two sides of (\ref{eqn:NI}) to the torus fixed points on $\Fl$ agree.

Let us pick a torus fixed point $f\in \Fl$. It corresponds to a decomposition
$$\{1,\ldots,n\}=K_1\cup\ldots \cup K_l \cup K_{l+1}$$
into disjoint subsets with $|K_c|=m_c$ for $c=1,\ldots,l$ and $|K_{l+1}|=n-k$.
First listing the elements of $K_1$ (in any order), then the elements of $K_2$ (in any order), etc, lastly the elements of $K_l$ (in any order), we obtain a list of numbers $j_1,\ldots,j_k$. Restricting $[p(t_1,\ldots,t_k,z,h)]\in H_{\T}^*(\Gr)$ to the fixed point $f$ amounts to substituting $t_a=z_{j_a}$ into $p$.

\smallskip



Observe that $N_I|_{t_a=z_{j_a}}$ is 0 unless $j_a \leq i_{\hat{a}}$ for all $a$. If $j_a \leq i_{\hat{a}}$ for all $a$ then
\begin{equation*}
N_I|_{t_a=z_{j_a}} = \left(\prod_{c>d} \prod_{i_a\in I_c} \prod_{i_b\in I_d} (z_{j_a} - z_{j_b} -h) \right) \cdot
\prod_{a=1}^k \left( \prod_{u=1}^{i_{\hat{a}}} (z_{j_a} - z_u +h) \prod_{u=i_{\hat{a}}+1}^n (z_{j_a}-z_u)\right) \nonumber
\end{equation*}
\begin{equation*}
= (-1)^{\sum_{c<d} m_cm_d} \left(\prod_{c>d} \prod_{i_a\in I_c} \prod_{i_b\in I_d} (z_{j_b} - z_{j_a} +h) \right) \cdot
\prod_{a=1}^k \left( \prod_{u=1}^{i_{\hat{a}}} (z_{j_a} - z_u +h) \prod_{u=i_{\hat{a}}+1}^n (z_{j_a}-z_u)\right) \nonumber
\end{equation*}
\begin{equation*}
=(-1)^{\sum_{c<d} m_cm_d}
\left(\prod_{a=1}^k \prod_{b=1}^k (z_{j_a}-z_{j_b}+h) \right) \cdot \prod_{a=1}^k
\left( \prod_{\substack{ u\in \{1,\ldots,i_{\hat{a}}\} \\ \ \ - \{ j_1,\ldots, j_{\hat{a}}\} }} (z_{j_a}-z_u +h) \prod_{u=i_{\hat{a}}+1}^n (z_{j_a}-z_u)\right).
\end{equation*}

Now we turn to the study of $[C\!\tilde{S}_I]|_{f}$. 

The point $f$ is not contained in $\tilde{S}_I$ unless $j_a \leq i_{\hat{a}}$ for all $a=1,\ldots, k$, hence in this case $[C\!\tilde{S}_I]_{f}=0$. Let us now assume $j_a \leq i_{\hat{a}}$ for all $a=1,\ldots, k$.
The weights of the action of ${\T}$ on the tangent plane $T_f\!\Fl$ are
$$z_{u}-z_{j_a} \qquad \qquad \text{for}\ a=1,\ldots,k, \ \ u\in \{1,\ldots,n\} - \{j_1,\ldots, j_{\hat{a}}\}.$$
Among these weights the ones that correspond to eigenvectors in the tangent space to $\tilde{S}_I$ are
$$z_{u}-z_{j_a} \qquad \qquad \text{for}\ a=1,\ldots,k, \ \ u\in \{1,\ldots,i_{\hat{a}}\} - \{j_1,\ldots, j_{\hat{a}}\}.$$
Hence the weights of the normal space to $\tilde{S}_I$ at $f$ are $z_u-z_{j_a}$ for $u>i_{\hat{a}}$. The weights of the normal space to $C\!\tilde{S}|_{f} \subset T^*\!\Fl|_{f}$ are hence $z_{j_a}-z_u +h$ for $u\in \{1,\ldots,i_{\hat{a}}\}- \{ j_1,\ldots, j_{\hat{a}}\} $.

Therefore we have
$$[C\!\tilde{S}_I]|_{f} = \prod_{a=1}^k \left( \prod_{\substack{ u\in \{1,\ldots,i_{\hat{a}}\} \\ \ \ - \{ j_1,\ldots, j_{\hat{a}}\} }} (z_{j_a}-z_u +h) \prod_{u=i_{\hat{a}}+1}^n (z_u-z_{j_a}) \right).$$

Since the restriction of $e_h(\tilde{\gamma}^* \otimes \tilde{\gamma})$ to the fixed point $f$ is $\prod_{a=1}^k \prod_{b=1}^k (z_{j_a}-z_{j_b}+h)$, we proved that the two sides of (\ref{eqn:NI}) are equal restricted to the fixed point $f$.
\end{proof}

\subsection{Equivariant fundamental class of the conormal bundle on $\Gr$} \label{sec:fundamental_on_Gr}

\begin{definition}\label{def:CSonGr}
We define the equivariant fundamental cohomology class of the conormal bundle of $S_I \subset \Gr$ to be $\pi_*([C\!\tilde{S}_I])$, and we will denote this class by ${\kappa_I}$.
\end{definition}



Let $\gamma$ be the bundle over $\Gr$ whose fiber over $W\subset \C^n$ is $W$.
Consider the ${\T}$ action on the bundle $\gamma^*\otimes \gamma$ where the $(\C^*)^n$ action is induced by the action on $\Gr$, and the extra $\C^*$ acts by multiplication in the fiber direction. The Euler class of this bundle will be denoted by $e_h({\gamma}^* \otimes {\gamma})$. We have
$$e_h(\gamma^* \otimes \gamma)=\prod_{a=1}^k \prod_{b=1}^k (t_a-t_b+h).$$

Recall the definition of the $Y_I$-function from Section \ref{sec:Y}. Interpret the variables $t$, $z$, and $h$ of this function according to the definitions of the present section, that is, as equivariant cohomology classes in~$\Gr$. Then $Y_I\in H_{\T}^*(\Gr)$. Recall that $\sgn(I)=(-1)^{\codim S_I}$.

\begin{theorem} \label{thm:main}
We have
\begin{equation*}
\sgn(I) \cdot e_h(\gamma^* \otimes \gamma) \cdot {\kappa_I} =Y_I \qquad\qquad \in H_{\T}^*(\Gr).
\end{equation*}
\end{theorem}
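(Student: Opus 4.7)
The plan is to push Theorem \ref{thm:NI_geometry} down from $\Fl$ to $\Gr$ along $\pi_*$, and then recognize the result as formula (\ref{eqn:Ygoodform}) for $Y_I$ from Lemma \ref{lem:Ylemma}, modulo a correction term that I will show vanishes.

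First, since $\pi$ sends $(L_1\subset\cdots\subset L_l)$ to $L_l$, the tautological bundles satisfy $\tilde\gamma = \pi^*\gamma$, hence $e_h(\tilde\gamma^*\otimes\tilde\gamma) = \pi^*e_h(\gamma^*\otimes\gamma)$. Applying $\pi_*$ to (\ref{eqn:NI}) and invoking the projection formula,
$$\sgn(I)\cdot e_h(\gamma^*\otimes\gamma)\cdot\kappa_I \;=\; \pi_*(N_I) \in H^*_\T(\Gr).$$
Thus the theorem reduces to the purely algebraic identity $\pi_*(N_I) = Y_I$.

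Second, the push-forward formula (\ref{eqn:int}) computes
$$\pi_*(N_I) \;=\; \sym_{S_k/S_\m}\frac{N_I}{\prod_{c>d}\prod_{i_a\in I_c}\prod_{i_b\in I_d}(t_a-t_b)}, \qquad \pi_*(M_I) \;=\; \sym_{S_k/S_\m}\frac{M_I}{\prod_{c>d}\prod_{i_a\in I_c}\prod_{i_b\in I_d}(t_a-t_b)},$$
so formula (\ref{eqn:Ygoodform}) of Lemma \ref{lem:Ylemma} reads $Y_I = \pi_*(N_I) + \pi_*(M_I)$. It therefore suffices to show $\pi_*(M_I) = 0$ in $H^*_\T(\Gr)$.

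Finally, to prove this vanishing I would use equivariant localization: by Section \ref{sec:localization} the restriction map from $H^*_\T(\Gr)$ to $\bigoplus_J H^*_\T(f_J)$ is injective, so it is enough to check that each fixed-point restriction of $\pi_*(M_I)$ vanishes. Evaluating the symmetrized rational expression at $t_a = z_{j_a}$ for $J = \{j_1<\cdots<j_k\}$, every summand takes the form $M_I(z_{j_{\sigma(1)}},\ldots,z_{j_{\sigma(k)}})/\prod(z_{j_{\sigma(a)}}-z_{j_{\sigma(b)}})$. The denominators are nonzero because the variables $z_1,\ldots,z_n$ are algebraically independent, and the key input recorded in the proof of Lemma \ref{lem:Ylemma} is that $M_I$ is a sum of terms each divisible by $\prod_{u=1}^n(t_b-z_u)$ for some $b$; in particular $M_I$ vanishes under any specialization $t_a = z_{u_a}$ with $u_a\in\{1,\ldots,n\}$. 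Therefore every summand, and hence the whole restriction, is zero. The main subtlety is precisely in this last step: one must isolate the divisibility property of $M_I$ from within the proof of Lemma \ref{lem:Ylemma} and verify that it survives the $S_k/S_\m$-symmetrization at a fixed point without producing $0/0$ indeterminacies — both points are settled by the algebraic independence of $z_1,\ldots,z_n$ together with the divisibility just quoted.
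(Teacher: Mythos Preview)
Your proof is correct and follows essentially the same route as the paper's. The only cosmetic difference is that the paper kills $M_I$ already in $H^*_\T(\Fl)$ --- observing that its restriction to every fixed point of $\Fl$ vanishes, hence $\pi_*(N_I)=\pi_*(N_I+M_I)=Y_I$ --- whereas you push forward first and then kill $\pi_*(M_I)$ on $\Gr$; the underlying input is the same divisibility property of $M_I$ extracted from the proof of Lemma~\ref{lem:Ylemma}, and the paper's ordering simply avoids the (harmless) denominator check you flag at the end.
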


Note that $e_h(\gamma^* \otimes \gamma)$ is not a zero-divisor (because none of its fixed point restrictions vanish), hence the equation in Theorem~\ref{thm:main} uniquely determines ${\kappa_I}$.

\begin{proof}
Let us apply $\pi_*$ to (\ref{eqn:NI}). The left hand side will map to $\sgn(I) e_h(\gamma^*\otimes \gamma) {\kappa_I}$, because the bundle $\tilde{\gamma}$ over $\Fl$ is the pullback of the bundle $\gamma$ over $\Gr$.

Recall the definition of $M_I$ from (\ref{eqn:Ygoodform}). As a cohomology class in $H^*_{T}(\Fl)$, $M_I$ is zero, because it restricts to 0 at every ${\T}$ fixed points of $\Fl$. Hence, $\pi_*$ applied to the right hand side of (\ref{eqn:NI}) is
$$\pi_*(N_I) = \pi_*(N_I + M_I) = Y_I.$$
The last equality holds because of the comparison of (\ref{eqn:Ygoodform}) and (\ref{eqn:int}).
\end{proof}

\medskip


Let $p: T^*\!\Gr \to \Gr$ be the projection of the bundle. Let $\Sing S_I\subset S_I$ be the subvariety of singular points of~$S_I$ and
$S^{\sm}_I=S_I-\Sing S_I$ the smooth part. Define the conormal bundle of the smooth part to be
\begin{equation}
\label{eqn:def_conormal_S MY}
C\!S^{\sm}_I=\{\alpha\in T_x^*\Gr : x\in S^{\sm}_I, \alpha(T_xS_I)=0\} \ \ \subset p^{-1}(\Gr - \Sing S_I).
\end{equation}
Denote
$
[C\!S^{\sm}_I] \in H^*_{\T}(p^{-1}(\Gr - \Sing S_I))
$
the equivariant fundamental cohomology class of $C\!S^{\sm}_I$. Consider the embedding
$j:p^{-1}(\Gr - \Sing S_I) \hookrightarrow T^*\!\Gr.$


\begin{theorem} \label{thm:on_gr}
We have $
j^*({\kappa_I}) = [C\!S^{\sm}_I]
\in H^*_{\T}(p^{-1}(\Gr - \Sing S_I)).$
\end{theorem}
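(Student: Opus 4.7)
The strategy is to use proper base change to localize the statement to the open set $U := \Gr\setminus\Sing S_I$, where the resolution $\pi$ becomes an isomorphism onto its image, and then to carry out a local fiberwise computation.

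Set $\tilde U:=\pi^{-1}(U)$, let $j_{\tilde U}:\tilde U\hookrightarrow \Fl$ be the open inclusion, and let $\pi_U:=\pi|_{\tilde U}:\tilde U\to U$; since $\pi$ is proper, so is $\pi_U$, and the square
$$
\begin{matrix}
\tilde U & \xrightarrow{\ j_{\tilde U}\ } & \Fl \\
\ \pi_U \Big\downarrow & & \Big\downarrow \pi\ \\
U & \xrightarrow{\ j\ } & \Gr
\end{matrix}
$$
is Cartesian. Proper base change for the $\T$-equivariant Gysin pushforward gives
$$
j^*(\kappa_I) \;=\; j^*\pi_*[C\tilde S_I] \;=\; (\pi_U)_*\bigl(j_{\tilde U}^*[C\tilde S_I]\bigr).
$$
Since $C\tilde S_I$ is closed in $T^*\Fl$ and $T^*\tilde U$ is an open subset, $j_{\tilde U}^*[C\tilde S_I] = [C\tilde S_I\cap T^*\tilde U] = [C(\tilde S_I\cap \tilde U)]$, where the second equality uses smoothness of $\tilde S_I$.

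The essential geometric input, and what I expect to be the main obstacle, is the assertion
$$
\pi_U\Bigl|_{\tilde S_I\cap\tilde U}: \tilde S_I\cap\tilde U \xrightarrow{\ \sim\ } S^{\sm}_I.
$$
For $W\in S_I$ the fiber $\pi^{-1}(W)\cap \tilde S_I$ consists of flags $L_1\subset\ldots\subset L_l=W$ with $L_c\subset W\cap \C^{i_{v(c)}}$ and $\dim L_c=v(c)$, hence is a single point precisely when $\dim(W\cap \C^{i_{v(c)}})=v(c)$ for every $c$. The fact that this open stratum of $S_I$ coincides with $S^{\sm}_I$ is the standard description of the smooth locus of a Grassmannian Schubert variety (see e.g.\ \cite[Sect.\ 3.4--3.5]{manivel}); this is the place where careful justification is required.

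Given the isomorphism of the preceding step, $\tilde S_I\cap \tilde U$ is a smooth local section of the flag bundle $\pi_U$ defined over $S^{\sm}_I$. Working in a local trivialization $\pi_U^{-1}(V)\cong V\times F$ over a neighborhood $V\subset U$ (in which the section is constant, equal to $V\cap S^{\sm}_I \times \{p_0\}$ for some $p_0\in F$), the conormal bundle decomposes as
$$
C\bigl(\tilde S_I\cap\tilde U\bigr)\bigl|_{(V\cap S^{\sm}_I)\times\{p_0\}} \;\cong\; CS^{\sm}_I\bigr|_{V\cap S^{\sm}_I}\,\times\, T^*_{p_0}F,
$$
so by K\"unneth its fundamental class is $[CS^{\sm}_I|_{V}]\otimes[\{p_0\}]$, and the Gysin pushforward along $\pi_U$ — which integrates the point class $[\{p_0\}]$ over $F$ to $1$ — yields $[CS^{\sm}_I]|_{V}$. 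Gluing over a trivializing cover of $S^{\sm}_I$ gives $(\pi_U)_*[C(\tilde S_I\cap\tilde U)] = [CS^{\sm}_I]$, and combining the three steps produces the desired equality $j^*(\kappa_I)=[CS^{\sm}_I]$. As an alternative verification of the last step, one may compare the two classes after restriction to the $\T$-fixed points of $U$ lying in $S^{\sm}_I$, where the weights of the relevant normal and conormal bundles can be matched directly, as in the proof of Theorem~\ref{thm:NI_geometry}.
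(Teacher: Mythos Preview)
Your overall architecture matches the paper's: both hinge on the fact that the resolution $\pi|_{\tilde S_I}$ is an isomorphism over $S^{\sm}_I$, and then compare the conormal classes there. Your base-change formulation packages this cleanly, and your description of the fibres of $\pi|_{\tilde S_I}$ is correct and is essentially the content of the paper's Lemma~\ref{lem:one_fixed_point} (stated there only at fixed points).

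The genuine gap is in your Step~4, the computation of $(\pi_U)_*[C(\tilde S_I\cap\tilde U)]$. Your local trivialization/K\"unneth argument is carried out non-equivariantly: the trivializations in which the section becomes ``constant'' are not $\T$-equivariant, and knowing the identity in ordinary cohomology does not imply it in $H^*_\T$, since the forgetful map $H^*_\T(U)\to H^*(U)$ is not injective. Gluing over a non-equivariant cover does not recover the equivariant class. Your proposed ``alternative verification'' by restriction to fixed points is in fact the paper's actual argument, but it requires a nontrivial ingredient you do not mention: injectivity of the localization map
\[
H^*_\T(\Gr-\Sing S_I)\ \longrightarrow\ \bigoplus_f H^*_\T(f).
\]
Because $\Gr-\Sing S_I$ is non-compact, this is not the standard localization theorem; the paper proves it separately as Lemma~\ref{lem:MV} via a cell-by-cell Mayer--Vietoris argument. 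Without this injectivity, matching the two classes at fixed points does not force them to be equal. You should also compare at \emph{all} fixed points of $U$, not only those in $S^{\sm}_I$ (the check outside $S_I$ is trivial, since both classes restrict to $0$ there).

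In short: replace the local-trivialization paragraph by the fixed-point comparison you sketch at the end, and add a proof (or citation) of the injectivity of equivariant localization on $\Gr-\Sing S_I$; then your argument becomes essentially the paper's.
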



\begin{proof} To prove Theorem \ref{thm:on_gr} first we recall two lemmas from Schubert calculus that are probably known to the specialists, but we sketch their proofs because we did not find exact references.

\begin{lemma} \label{lem:one_fixed_point}
Let $f_J$ be a smooth ${\T}$ fixed point on $S_I$. Then there is exactly one ${\T}$ fixed point in $\tilde{S}_I \cap\pi^{-1}(f_J)$.
\end{lemma}

\begin{proof} Recall the notation of the blocks in $I$, and assume $J=\{j_1<\ldots<j_k\}$. Assume that $f_J\in S_I$, hence $j_a\leq i_a$ for all $a=1,\ldots,k$. The description of the components of the singular locus of $S_I$ in \cite[Thm.~3.4.4]{manivel} can be rephrased to our language as follows: The ${\T}$ fixed point $f_J$ is a singular point on $S_I$ if and only if $j_{v(c)+1}\leq i_{v(c)}$ for some $c$. If this does not happen for any $c$ then
$$|\{a: j_a\leq v(c)\}|=v(c)$$
for all $c=1,\ldots,l$. In this case there is only one choice for a fixed point in $\tilde{S}_I \cap \pi^{-1}(f_J)$, namely
$$\spa\{\ep_{j_1},\ldots, \ep_{j_{v(1)}}\} \subset \spa\{ \ep_{j_1}, \ldots, \ep_{j_{v(2)}}\} \subset \ldots \subset \spa\{ \ep_{j_1}, \ldots, \ep_{j_{v(l)}}\}.$$
\end{proof}

\begin{lemma} \label{lem:MV}
The localization map
$$H_{\T}^*(\Gr - \Sing S_I) \to \bigoplus_{f} H_{\T}^*(f), \qquad\qquad
\alpha\mapsto \left( \alpha|_{f} \right),$$
where $\oplus$ runs for the ${\T}$ fixed points $f$ in $\Gr - \Sing S_I$, is injective.
\end{lemma}

\begin{proof}
The injectivity of the localization map is usually phrased for compact manifolds (see for example the original \cite{AB}). However, here we sketch an argument for $\Gr- \Sing S_I$. Starting with the open Schubert cell, we add the cells of $\Gr-\Sing S_I$ one by one, in order of codimension. At each step we can use a Mayer-Vietoris argument to show that if the localization map was injective before adding the cell, then it is injective after adding the cell. A strictly analogous argument is shown in the proof of \cite[Lemma~5.3]{cr}. The only condition for such a step-by-step Mayer-Vietoris argument to work is that the equivariant Euler-class of the normal bundle of each cell is not a zero-divisor. This calculation is done for Grassmannians, for example, in \cite[Sect.~5]{ss}.
\end{proof}

Now we can prove Theorem \ref{thm:on_gr}. Let $f_J$ be a smooth ${\T}$ fixed point of $S_I$, and $\tilde{f}_J\in \Fl$ the unique ${\T}$ fixed point in $\tilde{S}_I\cap \pi^{-1}(f_J) $.
An analysis of the ${\T}$ representations on $T_{f_J}\!\Gr$ and $T_{\tilde{f}_J}\!\Fl$, similar to the one in the proof of Theorem \ref{thm:NI_geometry}, gives
\begin{equation}
[C\!\tilde{S}_I]|_{\tilde{f}_J}=[C\!S^{\sm}_I]|_{f_J} \cdot \prod_{c>d} \prod_{i_a\in I_c} \prod_{i_b\in I_d} (z_{i_a}-z_{i_b}).
\end{equation}
Therefore we have
$$\left. {\kappa_I} \right|_{f_J} = \pi_*\left( [C\!\tilde{S}_I]|_{\tilde{f}_J} \right) = [C\!S^{\sm}_I]|_{f_J}.$$
If $f$ is a ${\T}$ fixed point in $\Gr - S_I$ then obviously both $\left. {\kappa_I} \right|_{f}$ and $[C\!S^{\sm}_I]|_{f}$ are 0. Thus we found that
the localization map of Lemma \ref{lem:MV} maps $j^*({\kappa_I}) - [C\!S^{\sm}_I]$ to 0. This proves the theorem.
\end{proof}

\section{Schur polynomials} \label{sec:schur}

\begin{definition} For $I=\{i_1<i_2<\ldots<i_k\} \subset \{1,\ldots,n\}$ we define the double Schur polynomial by
\[
\Delta_I(t_1,\ldots,t_k,z_1,\ldots,z_n)=(-1)^{\codim S_I} \det\left( \prod_{u=i_a+1}^n (t_\beta-z_u) \right)_{\alpha,\beta=1,\ldots,k}
\frac{1}{\prod_{1\leq a<b\leq k} (t_a-t_b)}.
\]
\end{definition}

\begin{remark} \rm
This definition of double Schur polynomials is the so-called {\em bialternant} definition. Other definitions include a (generalized) Jacobi-Trudi determinant, an interpolation definition, and the fact that double Schur polynomials are special cases of double Schubert polynomials that are described recursively using divided difference operators. For references see \cite{MD1}, \cite[Ex.20, Section~I.3, p.54]{MD2}, \cite[Lecture 8]{Fulton}, or \cite{ss}.
\end{remark}

In Schubert calculus it is well known that equivariant classes of Schubert varieties in Grassmannians are represented by double Schur polynomials.

\begin{theorem} (See \cite[Lecture 8]{Fulton} and references therein.) \label{them:S_I_is Schur}
The $(\C^*)^n$-equivariant fundamental class $[S_I]$ of $S_I$ in $H^*_{(\C^*)^n}(\Gr)$ is represented by the double Schur polynomial $\Delta_I(t,z)$.
\end{theorem}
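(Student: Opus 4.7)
The statement is classical, and my plan is to verify it by equivariant localization. The restriction map $H^*_{(\C^*)^n}(\Gr) \to \bigoplus_J H^*_{(\C^*)^n}(f_J)$ to the torus fixed points is injective, so it suffices to compare $\Delta_I|_{f_J}$ with $[S_I]|_{f_J}$ for every $k$-subset $J\subset\{1,\ldots,n\}$.

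The first step is a vanishing property of $\Delta_I$. Substituting $t_\beta = z_{j_\beta}$, the $(\alpha,\beta)$ entry of the matrix in the bialternant definition becomes $\prod_{u=i_\alpha+1}^n(z_{j_\beta}-z_u)$, which is zero precisely when $j_\beta > i_\alpha$. If $I \not\geq J$, choose $\alpha$ with $j_\alpha > i_\alpha$; then for every $\alpha' \leq \alpha$ and $\beta' \geq \alpha$ one has $j_{\beta'}\geq j_\alpha > i_\alpha \geq i_{\alpha'}$, producing an $\alpha \times (k-\alpha+1)$ block of zero entries whose dimensions sum to $k+1$. Hence the determinant vanishes, so $\Delta_I|_{f_J}=0$ whenever $I \not\geq J$.

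Next I would invoke a standard triangularity argument. The Schubert classes $\{[S_K]\}_K$ form an $H^*_{(\C^*)^n}(\mathrm{pt})$-basis of $H^*_{(\C^*)^n}(\Gr)$, and $[S_K]|_{f_J}=0$ unless $K\geq J$. Expand $\Delta_I = \sum_K c_K [S_K]$ with $c_K$ homogeneous of degree $\codim S_I - \codim S_K$. Suppose some $c_K \neq 0$ with $I \not\geq K$, and choose such a $K$ maximal in componentwise order. Restricting at $f_K$ yields
\[
0 = \Delta_I|_{f_K} = c_K\,[S_K]|_{f_K} + \sum_{J>K}c_J\,[S_J]|_{f_K}.
\]
Any $J>K$ with $c_J\neq 0$ must satisfy $I\geq J$ by maximality of $K$; but then $K<J\leq I$ gives $K\leq I$ by transitivity, contradicting $I\not\geq K$. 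Hence $c_K\,[S_K]|_{f_K}=0$, and since $[S_K]|_{f_K}$ is the nonvanishing equivariant Euler class of the normal bundle to $S_K$ at the smooth fixed point $f_K$, we conclude $c_K=0$. Thus $\Delta_I \in \spa\{[S_K]:I\geq K\}$, and the degree constraint $\codim S_K \leq \codim S_I$ combined with $I\geq K$ forces $K=I$. Therefore $\Delta_I = c\cdot [S_I]$ for some scalar $c\in\C$.

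Finally I would match at $f_I$. Substituting $t_\beta = z_{i_\beta}$ makes the bialternant matrix lower triangular, since the $(\alpha,\beta)$-entry vanishes for $\beta>\alpha$. The product of diagonal entries is $\prod_\alpha \prod_{u>i_\alpha}(z_{i_\alpha}-z_u)$; splitting off $\prod_{\alpha<\beta}(z_{i_\alpha}-z_{i_\beta})$ to cancel against the Vandermonde $\prod_{a<b}(z_{i_a}-z_{i_b})$, and including the sign $(-1)^{\codim S_I}$, produces $\prod_a\prod_{u>i_a,\,u\notin I}(z_u-z_{i_a})$, which is precisely $[S_I]|_{f_I}$ (the equivariant Euler class of the normal bundle to $S_I$ at the smooth point $f_I$). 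This forces $c=1$. The main obstacle is the sign-tracking in this last step, namely disentangling the diagonal factors with $u\in I$ (canceling against the Vandermonde) from those with $u\notin I$ (producing the normal-bundle weights).
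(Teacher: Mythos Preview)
Your argument is correct. Note, however, that the paper does not actually prove this theorem: it records the statement as classical and cites \cite[Lecture~8]{Fulton}. So there is no in-paper proof to compare against.

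Your localization approach is a standard and complete way to establish the result. The one place you flag as delicate---the sign in the final matching at $f_I$---works out cleanly: the number of factors $(z_{i_a}-z_u)$ with $u>i_a$, $u\notin I$ is
\[
\sum_{a=1}^k \bigl(n-i_a-(k-a)\bigr)=k(n-k)-\ell(I)=\codim S_I,
\]
so the global sign $(-1)^{\codim S_I}$ exactly flips each such factor to $(z_u-z_{i_a})$, matching the equivariant Euler class of the normal bundle to $S_I$ at $f_I$. One small remark on the triangularity step: when you say ``choose such a $K$ maximal,'' you are using that a finite poset has maximal elements, and the ensuing contradiction shows the sum over $J>K$ is empty, not merely that one term vanishes; your write-up conveys this, but it is worth making explicit.
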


What connects this fact with the objects of the present paper is the following observation.

\begin{proposition} \label{prop:leading} Recall that $\ell(I)=\sum_{a=1}^k (i_a-a)$ is the dimension of $S_I$.
Consider the weight function $W_I$ as a polynomial in $h$. We have
$$W_I=\sgn(I) \Delta_I(t,z) \cdot h^{k^2+\ell(I)} + \text{lower degree terms}.$$
\end{proposition}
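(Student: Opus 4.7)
The plan is to expand the defining formula for $W_I$ as a polynomial in $h$ and extract the top-degree coefficient, matching it against the bialternant formula for $\Delta_I$.

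\textbf{Step 1 (degree count).} I will inspect each factor inside the $\sym_{S_k}$ of the definition of $W_I$ and tally the maximal power of $h$ it can contribute. The factor $h^k$ out front contributes $k$. For each fixed $a$, the product $\prod_{u=1}^{i_a-1}(t_a-z_u+h)$ is polynomial in $h$ of degree $i_a-1$; the product $\prod_{u=i_a+1}^n(t_a-z_u)$ has degree $0$ in $h$; and the rational product $\prod_{b=a+1}^k (t_a-t_b+h)/(t_a-t_b)$ has $h$-degree at most $k-a$. Summing gives $k + \sum_a(i_a-1) + \sum_a(k-a) = \sum_a i_a + k(k-1)/2$. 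Using $\ell(I)=\sum i_a - k(k+1)/2$ this equals $k^2+\ell(I)$, so this is the expected top $h$-degree.

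\textbf{Step 2 (extract the leading coefficient).} On each factor I replace it by the coefficient of its highest $h$-power. In $(t_a-z_u+h)$ the leading coefficient is $1$, and in $(t_a-t_b+h)/(t_a-t_b)$ the leading $h$-coefficient is $1/(t_a-t_b)$. The factors $(t_a-z_u)$ for $u>i_a$ are kept as is (they are $h$-constant). Thus the coefficient of $h^{k^2+\ell(I)}$ in the unsymmetrized integrand is
\[
\frac{\prod_{a=1}^k \prod_{u=i_a+1}^n (t_a-z_u)}{\prod_{1\le a<b\le k}(t_a-t_b)}.
\]
After applying $\sym_{S_k}$ and pulling the common Vandermonde out using $\prod_{a<b}(t_{\sigma(a)}-t_{\sigma(b)})=\sgn(\sigma)\prod_{a<b}(t_a-t_b)$, the numerator collapses to a determinant:
\[
\sym_{S_k}\!\left(\frac{\prod_a F_a(t_a)}{\prod_{a<b}(t_a-t_b)}\right)=\frac{\det\bigl(F_\alpha(t_\beta)\bigr)_{\alpha,\beta=1,\ldots,k}}{\prod_{a<b}(t_a-t_b)},\qquad F_\alpha(t):=\prod_{u=i_\alpha+1}^n (t-z_u).
\]

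\textbf{Step 3 (match with $\Delta_I$).} By the bialternant definition, $\Delta_I(t,z) = \sgn(I) \cdot \det(F_\alpha(t_\beta))/\prod_{a<b}(t_a-t_b)$, so the expression from Step~2 equals $\sgn(I)\,\Delta_I(t,z)$. Combined with the $h$-degree computation of Step~1, this yields $W_I = \sgn(I)\,\Delta_I(t,z)\,h^{k^2+\ell(I)} + \text{lower degree terms in }h$, as required.

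\textbf{Main obstacle.} The routine issue is bookkeeping: the extraction of leading $h$-coefficients must simultaneously kill the $(t_a-z_u)$ contributions from the first product and the $(t_a-t_b)$ numerators of the third product, leaving behind exactly the asymmetric rational expression whose $S_k$-symmetrization is a Schur-like ratio. The only genuine computational point is the symmetrization identity in Step~2, which is standard but the key step that converts the product formula for $W_I$ into the determinantal formula for $\Delta_I$; care is needed with the sign coming from the Vandermonde antisymmetry, which combines cleanly with $\sgn(I)$ in the definition of $\Delta_I$.
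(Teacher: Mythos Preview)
Your argument is correct and is precisely the direct computation the paper has in mind: its proof consists of the single sentence ``The statement follows from the explicit formulae for $W_I$ and $\Delta_I$,'' and your Steps~1--3 spell this out. The degree count, the extraction of the top $h$-coefficient, and the identification of the resulting $S_k$-symmetrized ratio with the bialternant determinant are all accurate, including the sign bookkeeping via $\sgn(I)=(-1)^{\codim S_I}$.
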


\begin{proof}
The statement follows from the explicit formulae for $W_I$ and $\Delta_I$.
\end{proof}

\begin{corollary} \label{cor:leading}
We have
\[
\begin{array}{rcrll}
 Y_I & =& \sgn(I) \Delta_I(t,z) &  h^{k^2+\ell(I)} & + \text{lower degree terms}. \\
 {\kappa_I}& = & [S_I] & h^{\ell(I)}   &  + \text{lower degree terms}.
\end{array}
\]
\end{corollary}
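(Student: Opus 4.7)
The plan is to derive both identities from Proposition~\ref{prop:leading}, Theorem~\ref{thm:main}, and a combinatorial observation about the coefficients $c_J$ in the expansion $Y_I = \sum_J c_J W_J$. The two lines are tightly linked: once the top $h$-degree term of $Y_I$ is identified, dividing the identity $\sgn(I)\,e_h(\gamma^*\otimes\gamma)\,\kappa_I = Y_I$ through by the class $e_h(\gamma^*\otimes\gamma)$, which is monic in $h$, produces the statement for $\kappa_I$. So the core task is to determine the leading $h$-term of $Y_I$.

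From Definition~\ref{def:Y}, $c_J$ is the coefficient of $v_J$ in $\Sigma_I(v_\emptyset)$. Since the operators $e_{2,1}^{(j)}$ act on distinct tensor factors and each squares to zero, expanding the product in $\Sigma_I$ identifies $c_J$ as the number of ordered decompositions $J = S_1 \sqcup \cdots \sqcup S_l$ with $|S_c| = m_c$ and $S_c \subset \{1,\ldots,i_{v(c)}\}$ (the index range of the $c$-th factor, as visible in the example $\Sigma_{\{2,4\}}$). A Hall-type argument shows such a decomposition exists iff $|J \cap \{1,\ldots,i_{v(c)}\}| \ge v(c)$ for every $c$; because each block $I_c$ consists of consecutive integers ending at $i_{v(c)}$, this is in turn equivalent to the componentwise inequalities $j_a \le i_a$, i.e.\ $J \le I$. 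For $J = I$ the decomposition is unique, since $I_{c'} \not\subset \{1,\ldots,i_{v(c)}\}$ for $c' > c$ forces $S_c = I_c$ by induction on $c$; hence $c_I = 1$.

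Combined with Proposition~\ref{prop:leading}, which gives that $W_J$ has top $h$-degree $k^2+\ell(J)$ with leading coefficient $\sgn(J)\Delta_J(t,z)$: for $J < I$ one has $\ell(J) = \sum_a(j_a-a) < \sum_a(i_a-a) = \ell(I)$, so only the $J = I$ term contributes to the coefficient of $h^{k^2+\ell(I)}$ in $Y_I$. With $c_I = 1$ this yields the first identity.

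For the second identity, view $H^*_{\T}(\Gr) = H^*_{(\C^*)^n}(\Gr)[h]$ and note $e_h(\gamma^*\otimes\gamma) = \prod_{a,b=1}^k (t_a-t_b+h)$ is monic of degree $k^2$ in $h$. Writing $\kappa_I = \sum_d K_d\,h^d$, comparison of top $h$-coefficients on the two sides of $\sgn(I)\,e_h(\gamma^*\otimes\gamma)\,\kappa_I = Y_I$ forces $K_d = 0$ for $d > \ell(I)$ and $K_{\ell(I)} = \Delta_I$. Theorem~\ref{them:S_I_is Schur} identifies $\Delta_I$ with $[S_I]$, completing the proof. The only nontrivial step is the combinatorial characterisation of $\{J : c_J \ne 0\}$ and the unicity $c_I = 1$; the rest is degree bookkeeping essentially forced by the top-coefficient comparison.
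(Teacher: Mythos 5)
Your proof is correct and takes essentially the same route as the paper, which likewise obtains the first line from the expansion $Y_I = W_I + \sum_{J<I} c_{I\!J} W_J$ combined with the $h$-degree count of Proposition~\ref{prop:leading}, and the second line from Theorems~\ref{thm:main} and~\ref{them:S_I_is Schur}. The only difference is that you supply the combinatorial verification that $c_I=1$ and that $c_J\neq 0$ forces $J\le I$, a fact the paper asserts without proof.
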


\begin{proof} The first statement follows from the fact $Y_I=W_I+$ sums of $W_J$-functions with $J < I$. The $h$-degrees of these $W_J$ functions are strictly less than the $h$-degree of $W_I$. The second statement is the consequence of Theorems \ref{thm:main} and \ref{them:S_I_is Schur}. Note that the second statement could also be proved geometrically.
\end{proof}

\section{Modified equivariant class of the conormal bundle}

Recall that
\begin{equation*}
Y_I=W_I+\sum_{J<I} c_{I\!J} W_J,
\end{equation*}
where $c_{IJ}$ are some positive integer coefficients. In other words, the transition matrix from the $W_I$ functions to the $Y_I$ functions is triangular (with 1's in the diagonal). Hence the same is true for its inverse:
\begin{equation*}
W_I=Y_I+\sum_{J<I} c'_{I\!J} Y_J,
\end{equation*}
where $c'_{IJ}$ are some integer coefficients. Comparing this expression with Theorem \ref{thm:main} we obtain that in $H_{\T}^*(\Gr)$ we have
\begin{equation*}
\begin{aligned}
W_I & =  \sgn(I) e_h(\gamma^* \otimes \gamma) {\kappa_I} +\sum_{J<I} c'_{I\!J} \sgn(J) e_h(\gamma^*\otimes \gamma) {\kappa_J} \\
    & =  \sgn(I) e_h(\gamma^* \otimes \gamma) \left( {\kappa_I}+ \sum_{J<I} c'_{I\!J} \frac{\sgn(J)}{\sgn(I)} {\kappa_J} \right).
\end{aligned}
\end{equation*}

\begin{definition} \label{def:modified}
We define the {\em modified equivariant fundamental class} of the conormal bundle of the Schubert variety $S_I$ to be
$${\kappa'_I}={\kappa_I}+ \sum_{J<I} c'_{I\!J} \frac{\sgn(J)}{\sgn(I)} {\kappa_J}.$$
\end{definition}

Notice that $J<I$ if and only if $S_J$ is a proper subvariety of $S_I$ (hence, in particular, $\dim S_J<\dim S_I$). Therefore the difference of $\kappa'_I$ and $\kappa_I$ is a linear combination of fundamental classes of conormal bundles of proper subvarieties of $S_I$.

\begin{example}\rm
For $k=1$ we have $Y_{\{i\}}=\sum_{j\leq i} W_{\{j\}}$. Hence for $i>1$
\begin{equation*}
\begin{aligned}
W_{\{i\}} & = Y_{\{i\}}- Y_{\{i-1\}} \\
& = (-1)^{n-i} e_h(\gamma^*\otimes \gamma) \kappa_{\{i\}} - (-1)^{n-(i-1)} e_h(\gamma^*\otimes \gamma) \kappa_{\{i-1\}} \\
& = (-1)^{n-i} e_h(\gamma^*\otimes \gamma) (\kappa_{\{i\}} + \kappa_{\{i-1\}}).
\end{aligned}
\end{equation*}
Therefore $\kappa'_{\{i\}}=\kappa_{\{i\}}+\kappa_{\{i-1\}}=[C\!S_{\{i\}}] + [C\!S_{\{i-1\}}]$.

For example, for $k=1$, $n=2$, let $p:T^*\!\Gr \to \Gr$ be the projection of the bundle as before,
 and $f_{\{1\}}$ the $\T$ fixed point with homogeneous coordinate $(1:0)$ on $\Gr=\Gr_1\!\C^2={\mathbb P}^1$. Then
\begin{align*}
\kappa'_{\{1\}} & =\kappa_{\{1\}}=[p^{-1}(f_{\{1\}})]=z_2-t_1,\\
\kappa'_{\{2\}} & =\kappa_{\{2\}}+\kappa_{\{1\}}=[\Gr] + [p^{-1}(f_{\{1\}})] = (2t_1-z_1-z_2+h)+(z_2-t_1)=t_1-z_1+h.
\end{align*}

\smallskip
For $k=2, n=4$ calculation shows
$$
\kappa'_{\{1,2\}}=\kappa_{\{1,2\}}, \qquad
\kappa'_{\{1,3\}}=\kappa_{\{1,3\}}+\kappa_{\{1,2\}}, \qquad
\kappa'_{\{1,4\}}=\kappa_{\{1,4\}}+\kappa_{\{1,3\}}, \qquad
\kappa'_{\{2,3\}}=\kappa_{\{2,3\}}+\kappa_{\{1,3\}}, $$
$$
\kappa'_{\{2,4\}}=\kappa_{\{2,4\}}+\kappa_{\{2,3\}}+\kappa_{\{1,4\}}+\kappa_{\{1,3\}}+\kappa_{\{1,2\}}, \qquad
\kappa'_{\{3,4\}}=\kappa_{\{3,4\}}+\kappa_{\{2,4\}}+\kappa_{\{1,2\}}.
$$
\end{example}

Let us compare the modified fundamental class with the earlier, non-modified ${\kappa_I}$ classes. We have
\begin{equation}\label{eqn:W1}
\begin{matrix}
\sgn(I) \  e_h(\gamma^* \otimes \gamma) \   {\kappa_I} & = & Y_I, \\
\sgn(I) \  e_h(\gamma^* \otimes \gamma) \   {\kappa'_I} & = & W_I.
\end{matrix}
\end{equation}
Recall from Theorem \ref{thm:on_gr} that $j^*({\kappa_I}) = [C\!S^{\sm}_I] \in H^*_{\T}(p^{-1}(\Gr - \Sing S_I))$.
The class $\kappa'_I$ does not satisfy this property over the whole $\Gr -\Sing S_I$, but only over the Schubert cell
$S^o_I$ which is a dense open subset of  $S_I$.

\begin{theorem}
Let $p:T^*\Gr \to \Gr$ be the projection of the bundle. Consider the Schubert cell
$$S^o_I=\{W^k \subset \C^n : \dim (W^k \cap \C^{i_a})= a\ \text{for}\ a=1,\ldots,k\}\subset \Gr,$$
and its conormal bundle $C\!S^o_I\subset p^{-1}(\Gr - \cup_{J<I} S_J)$. Let $i$ denote the embedding
$$i:p^{-1}(\Gr - \cup_{J<I} S_J) \hookrightarrow T^*\!\Gr.$$
Then
$$i^*({\kappa'_I})=[C\!S^o_I]\ \in H^*(p^{-1}(\Gr - \cup_{J<I} S_J)).$$
\end{theorem}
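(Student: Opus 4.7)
The plan is to verify $i^*(\kappa'_I) = [C\!S^o_I]$ at each $\T$-fixed point of $p^{-1}(U)$, where $U := \Gr - \bigcup_{J<I} S_J$, and then invoke injectivity of the equivariant localization map for this open set. Note that $f_K \in S_J$ iff $K \leq J$, so $f_K \in U$ iff $K \not< I$, i.e.\ either $K = I$ or $K$ is incomparable to $I$. The $\T$-fixed points of $p^{-1}(U) \subset T^*\Gr$ are the zero sections over these $f_K$.

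The key auxiliary observation is that $\kappa_J|_{f_K} = 0$ whenever $K \not\leq J$. Indeed, $\kappa_J = \pi_*[C\!\tilde{S}_J]$, and the localization formula~(\ref{eqn:last}) expresses $\kappa_J|_{f_K}$ as a sum over $\T$-fixed points $\tilde f \in \pi^{-1}(f_K)$ of contributions proportional to $[C\!\tilde{S}_J]|_{\tilde f}$. If $K \not\leq J$ then $f_K \notin S_J = \pi(\tilde{S}_J)$, so no $\tilde f \in \pi^{-1}(f_K)$ lies in $\tilde{S}_J$ and every summand vanishes.

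With this in hand I would check the two cases. If $K = I$, then for every $J<I$ one has $I \not\leq J$, so $\kappa_J|_{f_I}=0$; hence $\kappa'_I|_{f_I} = \kappa_I|_{f_I}$. By Theorem~\ref{thm:on_gr}, $\kappa_I|_{f_I} = [C\!S^{\sm}_I]|_{f_I}$, and since $S^o_I$ is an open subvariety of $S^{\sm}_I$ containing $f_I$, the conormal bundles $C\!S^o_I$ and $C\!S^{\sm}_I$ coincide in a $\T$-invariant neighborhood of $f_I$, so their fundamental classes share the same fixed-point restriction there. If instead $K$ is incomparable to $I$, then $K \not\leq I$, so $\kappa_I|_{f_K}=0$; moreover $K \not\leq J$ for every $J<I$ (otherwise $K\leq J<I$ would force $K<I$), so each $\kappa_J|_{f_K} = 0$ and therefore $\kappa'_I|_{f_K}=0$. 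On the other hand $f_K \notin S_I \supset S^o_I$, so $[C\!S^o_I]|_{f_K}=0$ as well.

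Finally, the localization map $H^*_{\T}(p^{-1}(U)) \to \bigoplus_{K \not< I} H^*_{\T}(f_K)$ is injective by the same step-by-step Mayer--Vietoris argument used in Lemma~\ref{lem:MV}: $U$ is the disjoint union of the Schubert cells $S^o_K$ with $K \not< I$, and each such cell has an equivariant normal Euler class that is not a zero-divisor, so cells may be attached one at a time in order of codimension while preserving injectivity. Applied to the difference $i^*(\kappa'_I) - [C\!S^o_I]$, this finishes the proof. The main subtlety worth flagging is the top fixed point $f_I$: the vanishing $\kappa_J|_{f_I}=0$ for $J<I$ is precisely what makes the correction terms $\sum_{J<I} c'_{IJ}\,(\sgn(J)/\sgn(I))\,\kappa_J$ in the definition of $\kappa'_I$ drop out there, preserving the match with $[C\!S^o_I]$.
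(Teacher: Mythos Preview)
Your argument is correct in substance, with one small slip: when you enumerate the fixed points of $U$ you write ``$K \not< I$, i.e.\ either $K=I$ or $K$ is incomparable to $I$,'' omitting the possibility $K>I$. Fortunately your second case only uses $K\not\le I$, which covers both $K>I$ and $K$ incomparable to $I$, so the argument goes through once the case split is relabeled.

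The paper's proof follows the same idea but packages it more directly. Rather than restricting to fixed points, it observes that $\kappa'_I-\kappa_I$, being a combination of classes $\kappa_J$ with $J<I$, is supported on $p^{-1}\bigl(\bigcup_{J<I}S_J\bigr)$ and hence killed by $i^*$; this is the global version of your pointwise vanishing $\kappa_J|_{f_K}=0$ for $K\not\le J$. It then factors $i$ as $p^{-1}(U)\hookrightarrow p^{-1}(\Gr-\Sing S_I)\stackrel{j}{\hookrightarrow} T^*\!\Gr$ and pulls back the already established identity $j^*(\kappa_I)=[C\!S^{\sm}_I]$ from Theorem~\ref{thm:on_gr}, obtaining $i^*(\kappa_I)=[C\!S^o_I]$ immediately. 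This bypasses the need to argue injectivity of localization for $p^{-1}(U)$ separately, since the equality is inherited from the intermediate open set rather than checked fixed point by fixed point.
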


\begin{proof}  The difference $\kappa'_I-\kappa_I$ is supported on $p^{-1}(\cup_{J<I} S_J)$. Therefore $i^*(\kappa'_I)=i^*(\kappa_I)$, so it is enough to show that $i^*({\kappa_I})=[C\!S^o_I]$. This follows from the diagram
\begin{equation*}
\xymatrix{p^{-1}(\Gr - \cup_{J<I} S_J) \ar@{^{(}->}[r] \ar@{^{(}->}@/^2pc/[rr]^{i} & p^{-1}(\Gr - \Sing S_I) \ar@{^{(}->}[r]_{\ \ \ \ \ \ \ j} & T^*\Gr \\
i^*({\kappa_I}) & j^*({\kappa_I}) \ar@{|-{>}}[l] \ar@{=}[d]& {\kappa_I}\ar@{|-{>}}[l] \\
[C\!S^o_I] & [C\!S^{\sm}_I]. \ar@{|-{>}}[l] &
}
\end{equation*}
\end{proof}

Some advantages of ${\kappa'_I}$ over ${\kappa_I}$ are discussed in Sections \ref{sec:orto} and \ref{sec R}.

\section{Orthogonality}\label{sec:orto}

\subsection{Orthogonality on $\Gr$}
Recall that $\epsilon_1, \ldots, \epsilon_n$ is the standard basis of $\C^n$, and we used the standard flag to define the Schubert variety
\begin{equation*}
S_I=\{W^k \subset \C^n : \dim (W^k \cap \spa(\epsilon_1,\ldots,\epsilon_{i_a}))\geq a\ \text{for}\ a=1,\ldots,k\}\subset \Gr.
\end{equation*}
Considering the opposite flag, we may define the opposite Schubert variety
\begin{equation*}
\opp{S}_I=\{W^k \subset \C^n : \dim (W^k \cap \spa(\epsilon_{n+1-i_a},\ldots,\epsilon_{n}))\geq a\ \text{for}\ a=1,\ldots,k\}\subset \Gr.
\end{equation*}

Consider the bilinear form on $H_{\T}^*(\Gr)$ defined by
$$(f,g) \mapsto \langle f,g \rangle = \int_{\Gr} fg,$$
where the equivariant integral on $\Gr$ can be expressed via localization by
$$\int_{\Gr} \alpha(t_1,\ldots,t_k) = \sum_I \frac{\alpha(z_I)}{\prod_{u\in I, v\not\in I} (z_v-z_u)}.$$
Here the sum runs for $k$-element subsets $I$ of $\{1,\ldots,n\}$. Note that the denominator is the equivariant Euler class of the tangent space to $\Gr$ at the fixed point corresponding to $I$.


\begin{definition}
For $I=\{i_1<\ldots<i_k\}$ define $\opp{I}=\{(n+1)-i_k, (n+1)-i_{k-1}, \ldots, (n+1)-i_1\}$.
\end{definition}

It is well known in Schubert calculus that
$$\langle [S_I], [\opp{S}_{\opp{J}}] \rangle = \delta_{I,J}.
$$

\subsection{Orthogonality on $T^*\!\Gr$}

Our goal is to describe similar orthogonality relations involving equivariant classes of conormal bundles.

Consider the bilinear form on $H_{\T}^*(T^*\!\Gr)=H_{\T}^*(\Gr)$ defined by
$$(f,g) \mapsto \ll f,g \gg = \int_{T^*\!\Gr} fg,$$
where the equivariant integral on $T^*\!\Gr$ is {\em defined} via localization by
$$\int_{T^*\!\Gr} \alpha(t_1,\ldots,t_k) = \sum_I \frac{\alpha(z_I)}{\prod_{u\in I, v\not\in I} (z_v-z_u)(z_u-z_v+h)}.$$
Here again, the sum runs for $k$-element subsets $I$ of $\{1,\ldots,n\}$. Note that the denominator is the equivariant Euler class of the tangent space to $T^*\!\Gr$ at the fixed point corresponding to $I$. This bilinear form takes values in the ring of rational functions in $z_1,\ldots,z_n$ and $h$. For more details, see \cite[Sect. 5.2]{yangian}.

In Section \ref{sec:geo}, starting with the Schubert variety $S_I$ we defined the equivariant fundamental classes ${\kappa_I}$ and ${\kappa'_I}$.
Similarly, starting with the opposite Schubert variety $\opp{S}_I$, we may define the opposite equivariant fundamental classes $\opp{\kappa}_I$ and $\opp{\kappa}_I'$. Arguments analogous to the ones in the sections above prove the following theorem.

\begin{theorem}\label{thm:mainv}
We have
\begin{equation}\label{eqn:W2}
\sgn(\opp{I}) e_h(\gamma^*\otimes \gamma) \opp{\kappa}'_{\opp{I}}=\opp{W}_I,
\end{equation}
where
\begin{equation}
\opp{W}_I(t_1,\ldots,t_k)=h^k \sym_{S_k}\left( \prod_{a=1}^k \left( \prod_{u=1}^{i_a-1} (t_a-z_u) \prod_{u=i_a+1}^n(t_a-z_u+h) \prod_{b=a+1}^k \frac{t_a-t_b-h}{t_a-t_b}\right) \right)
\end{equation}
is the ``dual'' weight function.
\end{theorem}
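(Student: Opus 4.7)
\medskip

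\noindent\textbf{Proof proposal.} The plan is to rerun the entire chain of arguments of Sections \ref{sec:weight}--\ref{sec:fundamental_on_Gr} in an ``opposite'' setting, replacing the standard flag by the opposite flag $\opp{\C}^i = \spa(\epsilon_{n+1-i},\ldots,\epsilon_n)$ throughout. On the representation-theoretic side, I would define opposite $Y$-functions $\opp{Y}_I$ by the same recipe as in Definition \ref{def:Y}, using the R-matrix obtained from the one in Section \ref{sec:R} by replacing $h$ with $-h$. The analog of Lemma \ref{lem:Ylemma} then expresses each $\opp{Y}_I$ as a symmetrization of $(\opp{N}_I + \opp{M}_I)/\prod_{c>d}\prod_{i_a\in I_c}\prod_{i_b\in I_d}(t_a-t_b)$, where $\opp{N}_I$ is obtained from $N_I$ by exchanging the factors $\prod_{u\leq i_{\hat a}}(t_a-z_u+h)$ with $\prod_{u>i_{\hat a}}(t_a-z_u+h)$ and by flipping the sign of $h$ in the block-crossing factors $(t_a-t_b-h)$, while $\opp{M}_I$ still vanishes at any $z$-permutation.

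On the geometric side, I would define the opposite resolution $\opp{\tilde S}_I \subset \opp{\Fl}$ using the opposite flag, and compute the restriction of $[C\opp{\tilde S}_I]$ to the $\T$-fixed points of $\opp{\Fl}$ by the same tangent-weight analysis as in the proof of Theorem \ref{thm:NI_geometry}. The only changes are that the roles of the two halves of the tangent space at each fixed point are swapped, and the conormal directions pick up the opposite $h$-shift; this yields the opposite analog
\[
\sgn(I)\cdot e_h(\tilde\gamma^*\otimes\tilde\gamma)\cdot [C\opp{\tilde S}_I] = \opp{N}_I
\]
on $\opp{\Fl}$. Applying $\pi_*$ and repeating the proof of Theorem \ref{thm:main} verbatim (using the formula for $\pi_*$ and the vanishing of $\opp{M}_I$ at all fixed points) gives $\sgn(I)\, e_h(\gamma^*\otimes\gamma)\, \opp{\kappa}_I = \opp{Y}_I$.

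Finally, the triangular change-of-basis between the opposite weight functions $\opp{W}_I$ and the opposite $Y$-functions $\opp{Y}_I$ (the mirror of the relation $W_I = Y_I + \sum_{J<I} c'_{IJ}Y_J$ used just before Definition \ref{def:modified}) defines the modification $\opp{\kappa}'_I$ and gives the opposite analog of (\ref{eqn:W1}):
\[
\sgn(I)\, e_h(\gamma^*\otimes\gamma)\, \opp{\kappa}'_I = \opp{W}_I.
\]
Relabeling $I \mapsto \opp I$ on both sides (which is a mere cosmetic bijection on $k$-subsets) yields the stated identity.

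The main obstacle is bookkeeping of signs and block structures under the flag reversal: one must check that the $-h$ appears precisely in the block-crossing factors of $\opp{N}_I$ (and not in the diagonal ones), that the sign $\sgn(\opp I) = (-1)^{\codim \opp{S}_{\opp I}}$ agrees with the sign produced by the opposite fixed-point computation, and that the ordering conventions in the definition of $\opp{Y}_I$ (which reverses the role of ``lower'' and ``upper'' blocks) exactly matches the geometry of the opposite resolution. Once these are verified, every individual step is formally identical to its counterpart in the body of the paper.
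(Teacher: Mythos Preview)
Your proposal is correct and matches the paper's approach exactly: the paper itself offers no proof beyond the sentence ``Arguments analogous to the ones in the sections above prove the following theorem,'' and your outline is precisely a sketch of what those analogous arguments would be. One caution on the final relabeling step: if the direct analog of (\ref{eqn:W1}) with the opposite flag reads $\sgn(I)\, e_h(\gamma^*\otimes\gamma)\, \opp{\kappa}'_I = (\text{some function of }I)$, then after substituting $I\mapsto\opp{I}$ the right-hand side becomes that function evaluated at $\opp{I}$, not at $I$; so you must check that the ``opposite weight function'' produced by the mirrored argument, when indexed by $\opp{I}$, coincides with the paper's $\opp{W}_I$ (this is where the reversal of both the $z$-ordering and the $a$-ordering inside the symmetrizer combine, and where the sign $\sgn(\opp{I})=(-1)^{\ell(I)}$ rather than $\sgn(I)$ emerges).
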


The following lemma is a special case of Theorem C.9 of \cite{TV1}.

\begin{lemma} \label{lem:orto}
For $A=\{a_1,\ldots,a_k\}\subset \{1,\ldots,n\}$ let $p(z_A)$ denote the substitution of $z_{a_1},\ldots,z_{a_k}$ into the variables $t_1,\ldots, t_k$ of the polynomial $p$. For $k$-element subsets $I,J$ of $\{1,\ldots,n\}$ we have
$$\sum_A \frac{W_I(z_A) \opp{W}_J(z_A)}{\prod_{u\in A,v\in \bar{A}} (z_u-z_v)(z_u-z_v+h) \cdot \prod_{u,v\in A} (z_u-z_v+h)^2} = \delta_{I,J},$$
where the summation runs for all $k$-element subsets $A$ of $\{1,\ldots,n\}$.
\end{lemma}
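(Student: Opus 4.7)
This identity appears as a special case of Theorem C.9 of \cite{TV1}, so the cleanest route is to verify that our setup matches the hypotheses of that theorem. For a self-contained proof in the present language, the plan is to read the formula as a dual-basis relation between $(W_I)_I$ and $(\opp{W}_J)_J$ under the pairing defined by the sum on the left.

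The first step is to extend Lemma \ref{lem:interpolation} to $\opp{W}_J$: by the same interpolation argument applied to the modified product defining $\opp{W}_J$, one obtains $\opp{W}_J|_{t_a=z_{a_a}}=0$ unless $A\geq J$ componentwise (with both sets written in increasing order), and $\opp{W}_J(z_J)\ne 0$. Combined with Lemma \ref{lem:interpolation}, the summand $W_I(z_A)\opp{W}_J(z_A)$ vanishes unless $J\leq A\leq I$. In particular the entire sum vanishes whenever $I\not\geq J$, which settles the case $I\ne J$ with $I$ and $J$ incomparable (or with $I<J$ strictly).

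Next, the diagonal case $I=J$ is a direct calculation. At $A=I=J$, only the identity permutation survives in each of the symmetrizations defining $W_I(z_I)$ and $\opp{W}_I(z_I)$ --- all other terms contain a vanishing factor of the form $z_{i_\alpha}-z_{i_\alpha}$ --- and the surviving product evaluates to
\begin{equation*}
W_I(z_I)\,\opp{W}_I(z_I) \;=\; \prod_{u\in I,\,v\in \bar I}(z_u-z_v)(z_u-z_v+h)\cdot\prod_{u,v\in I}(z_u-z_v+h)^2,
\end{equation*}
so the $A=I$ contribution equals $1$.

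The main obstacle is the remaining off-diagonal case $J<I$ strictly, where the sum runs over the interval $A\in[J,I]$ and no individual term vanishes. The strategy of \cite{TV1} is to rewrite the sum as an iterated residue of the rational $k$-form
\begin{equation*}
\omega_{I,J}(t) \;=\; \frac{W_I(t)\,\opp{W}_J(t)\cdot\prod_{1\leq a\neq b\leq k}(t_a-t_b)}{\prod_{a=1}^k\prod_{u=1}^n(t_a-z_u)(t_a-z_u+h)\cdot\prod_{1\leq a\neq b\leq k}(t_a-t_b+h)}\,dt_1\wedge\cdots\wedge dt_k,
\end{equation*}
and to deform the cycle from a neighborhood of $\{t_a\in\{z_u\}\}$ to one of $\{t_a\in\{z_u-h\}\}$. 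Since $\omega_{I,J}$ has no pole at infinity, the two residue sums agree, while the dual vanishings of $W_I$ at $t_a=z_u-h$ for $u>i_a$ and of $\opp{W}_J$ at $t_a=z_u$ for $u<j_a$ kill every newly-collected residue unless $I=J$. Executing this residue calculation is the technical heart of the argument, and in practice one simply invokes Theorem C.9 of \cite{TV1}.
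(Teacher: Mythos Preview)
Your proposal is correct and matches the paper: its entire proof is the one-line citation of Theorem~C.9 of \cite{TV1}, which you invoke in your opening sentence. The triangularity and diagonal computations you add are accurate and go beyond what the paper offers, but since the substantive off-diagonal case $J<I$ is ultimately deferred back to \cite{TV1}, the two arguments coincide in substance.
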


Now we are ready to prove the orthogonality relations for fundamental classes of conormal bundles.

\begin{theorem} \label{thm:orto}
We have
$$\ll \kappa'_I , \opp{\kappa}'_{\opp{J}} \gg = \delta_{I,J}.
$$
\end{theorem}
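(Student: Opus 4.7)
The plan is to reduce the orthogonality directly to Lemma \ref{lem:orto} by unfolding the inner product via the localization definition of $\int_{T^*\!\Gr}$ and substituting the relations that express $\kappa'_I$ and $\opp\kappa'_{\opp J}$ in terms of the weight functions $W_I$ and $\opp W_J$. The whole proof is essentially a substitution plus a sign check.

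First, I would use equations (\ref{eqn:W1}) and Theorem \ref{thm:mainv} (noting $\sgn(I)^2=1$) to write
\[
\kappa'_I \;=\; \frac{\sgn(I)\,W_I}{e_h(\gamma^*\otimes\gamma)}, \qquad
\opp\kappa'_{\opp J}\;=\;\frac{\sgn(\opp J)\,\opp W_J}{e_h(\gamma^*\otimes\gamma)}.
\]
At the torus fixed point $f_A$ corresponding to a $k$-subset $A\subset\{1,\dots,n\}$, the Chern roots $t_1,\dots,t_k$ restrict to $z_A$, and $e_h(\gamma^*\otimes\gamma)|_{f_A}=\prod_{u,v\in A}(z_u-z_v+h)$. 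Plugging this into the localization formula for $\ll\cdot,\cdot\gg$ gives
\[
\ll \kappa'_I,\,\opp\kappa'_{\opp J}\gg \;=\; \sgn(I)\sgn(\opp J)\,\sum_A\frac{W_I(z_A)\,\opp W_J(z_A)}{\prod_{u,v\in A}(z_u-z_v+h)^2\,\prod_{u\in A,\,v\notin A}(z_v-z_u)(z_u-z_v+h)}.
\]

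Next, I would reconcile the denominator with the one in Lemma \ref{lem:orto}, which contains $(z_u-z_v)$ instead of $(z_v-z_u)$ for $u\in A$, $v\notin A$. Flipping each of these $k(n-k)$ factors introduces a global sign $(-1)^{k(n-k)}$, so Lemma \ref{lem:orto} yields
\[
\ll \kappa'_I,\,\opp\kappa'_{\opp J}\gg \;=\; (-1)^{k(n-k)}\,\sgn(I)\sgn(\opp J)\,\delta_{I,J}.
\]

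What remains is the sign check when $I=J$, and this is the only step that requires a tiny computation. From $\sgn(I)=(-1)^{k(n-k)-\ell(I)}$,
\[
(-1)^{k(n-k)}\sgn(I)\sgn(\opp I)\;=\;(-1)^{k(n-k)+\ell(I)+\ell(\opp I)}.
\]
Writing $\opp I=\{n+1-i_k<\dots<n+1-i_1\}$ and computing directly, $\ell(I)+\ell(\opp I)=\sum_{a=1}^k(i_a-a)+\sum_{a=1}^k(n+1-i_a-a)=k(n-k)$, so the exponent is even and the sign is $+1$. The main obstacle, if any, is just this bookkeeping of signs; the conceptual content is entirely contained in Lemma \ref{lem:orto} together with the formulas $\sgn(I)e_h\kappa'_I=W_I$ and $\sgn(\opp J)e_h\opp\kappa'_{\opp J}=\opp W_J$.
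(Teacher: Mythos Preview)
Your proof is correct and follows essentially the same approach as the paper: unfold $\ll\cdot,\cdot\gg$ via localization, insert the relations $\sgn(I)\,e_h(\gamma^*\otimes\gamma)\,\kappa'_I=W_I$ and $\sgn(\opp J)\,e_h(\gamma^*\otimes\gamma)\,\opp\kappa'_{\opp J}=\opp W_J$, apply Lemma~\ref{lem:orto}, and check the sign $(-1)^{k(n-k)}\sgn(I)\sgn(\opp I)=1$. The paper's proof is simply a terser version of yours; your explicit verification that $\ell(I)+\ell(\opp I)=k(n-k)$ is the detail the paper leaves implicit.
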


\begin{proof}
Tracing back the definitions, as well as formulas (\ref{eqn:W1}) and (\ref{eqn:W2}), give
$$\ll \kappa'_I , \opp{\kappa}'_{\opp{J}} \gg=\sgn(I)\sgn(\opp{J})
\sum_A \frac{W_I(z_A)\opp{W}_J(z_A)}{\prod_{u\in A, v\not\in A}(z_v-z_u)(z_u-z_v+h)\cdot \left(e_h(\gamma^*\otimes \gamma)(z_A)\right)^2}.$$
According to Lemma \ref{lem:orto} this further equals
$$\sgn(I)\sgn(\opp{J})(-1)^{k(n-k)} \delta_{I,J}=\delta_{I,J}.$$
\end{proof}

\section{R-matrix in cohomology}
\label{sec R}

Recall that $\R=\C[z_1,\ldots,z_n,h]((z_i-z_j+h)^{-1})_{i,j}$.

\begin{proposition} \label{prop:weights_basis} The following are bases of the free module $H_{\T}^*(\Gr)\otimes \R$:
\begin{enumerate}
\item{} \label{q1} the classes $[S_I]$ for $I\subset \{1,\ldots,n\}$, $|I|=k$;
\item{} \label{q2} the classes $W_I$ for $I\subset \{1,\ldots,n\}$, $|I|=k$;
\item{} \label{q3} the classes $Y_I$ for $I\subset \{1,\ldots,n\}$, $|I|=k$;
\item{} \label{q4} the classes $\kappa_I$ for $I\subset \{1,\ldots,n\}$, $|I|=k$;
\item{} \label{q5} the classes $\kappa'_I$ for $I\subset \{1,\ldots,n\}$, $|I|=k$.
\end{enumerate}
\end{proposition}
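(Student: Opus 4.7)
The plan is to treat (1) as classical Schubert calculus and then deduce the remaining four assertions by chaining together $\R$-linear changes of basis between consecutive families. For (1), the classes $[S_I]$ form a basis of $H^*_{\T}(\Gr)$ as a module over $H^*_{\T}(\mathrm{pt})=\C[z_1,\ldots,z_n,h]$, so they also form an $\R$-basis of $H^*_{\T}(\Gr)\otimes\R$.

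I would then invoke the triangular relations already in the paper. Definition \ref{def:Y} gives $Y_I = W_I + \sum_{J<I} c_{IJ}W_J$ with integer $c_{IJ}$, a unitriangular change of basis establishing (2)$\Leftrightarrow$(3); Definition \ref{def:modified} does the same for $(\kappa_I)$ and $(\kappa'_I)$, establishing (4)$\Leftrightarrow$(5). For (3)$\Leftrightarrow$(4), Theorem \ref{thm:main} yields $Y_I = \sgn(I)\,e_h(\gamma^*\otimes\gamma)\kappa_I$, so I would verify that $e_h(\gamma^*\otimes\gamma)=\prod_{a,b=1}^k(t_a-t_b+h)$ is a unit of $H^*_{\T}(\Gr)\otimes\R$. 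This rests on two observations: at every fixed point $f_J$ the restriction $\prod_{a,b=1}^k(z_{j_a}-z_{j_b}+h)$ is a product of units of $\R$, and the tuple of fiberwise reciprocals satisfies the GKM edge conditions since $a\equiv b\pmod{z_i-z_j}$ implies $a^{-1}\equiv b^{-1}\pmod{z_i-z_j}$ whenever $a,b$ are units in a commutative ring.

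It remains to link $(\kappa_I)$ to $([S_I])$. I would use the injection $H^*_{\T}(\Gr)\otimes\R\hookrightarrow \bigoplus_{f_J}\R$ furnished by restriction to the torus fixed points. Both matrices $M^{[S]}_{I,J}:=[S_I]|_{f_J}$ and $M^{\kappa}_{I,J}:=\kappa_I|_{f_J}$ are lower triangular for any linear extension of the partial order $J\le I$ (meaning $j_a\le i_a$ for all $a$): $[S_I]|_{f_J}=0$ unless $f_J\in S_I$, i.e.\ $J\le I$; and for $J\not\le I$ the point $f_J$ lies in $\Gr-\Sing S_I$, so Theorem \ref{thm:on_gr} yields $\kappa_I|_{f_J}=[C\!S^{\sm}_I]|_{f_J}=0$. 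Writing $\kappa_I=\sum_J a_{IJ}[S_J]$, the identity $M^\kappa = a\cdot M^{[S]}$ together with triangularity gives $\det(a_{IJ})=\prod_I \kappa_I|_{f_I}/[S_I]|_{f_I}$. Since $f_I$ belongs to the open cell $S_I^o$ and is smooth, counting torus weights on the normal bundles of $S_I\subset \Gr$ and of $C\!S_I\subset T^*\!\Gr$ at $(f_I,0)$ (as in the proof of Theorem \ref{thm:NI_geometry}) yields
\[
[S_I]|_{f_I}=\prod_{\substack{1\le a\le k\\ u>i_a,\,u\notin I}}(z_u-z_{i_a}), \qquad \kappa_I|_{f_I}=[S_I]|_{f_I}\cdot\prod_{\substack{1\le a\le k\\ u<i_a,\,u\notin I}}(z_{i_a}-z_u+h).
\]
Each diagonal ratio is therefore a product of units of $\R$, so $\det(a_{IJ})\in\R^{\times}$ and $(\kappa_I)$ is an $\R$-basis.

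The main obstacle is ensuring that the transition determinants are units in the partially-localized ring $\R$, not merely nonzero in $\mathrm{Frac}(\R)$. Since $\R$ inverts $z_i-z_j+h$ but not $z_i-z_j$, both $[S_I]|_{f_I}$ and $\kappa_I|_{f_I}$ contain many non-unit factors $(z_u-z_{i_a})$, and the argument works only because these factors appear with equal multiplicity in the numerator and denominator of each diagonal ratio — a reflection of the shifted-by-$h$ mirror symmetry between tangent and conormal weights. The analogous observation that all fixed-point restrictions of $e_h(\gamma^*\otimes\gamma)$ involve only factors of the form $(z_i-z_j+h)$ is what makes the $e_h$-invertibility step go through over $\R$.
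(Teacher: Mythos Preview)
Your proof is correct and follows essentially the same route as the paper: anchor at (\ref{q1}) via Schubert calculus, handle (\ref{q2})$\Leftrightarrow$(\ref{q3}) and (\ref{q4})$\Leftrightarrow$(\ref{q5}) by the unitriangular transitions, bridge these pairs via the invertibility of $e_h(\gamma^*\otimes\gamma)$, and tie back to $[S_I]$ through a triangular transition matrix with diagonal entries in $\R^\times$. You in fact supply details the paper defers: the paper only records the diagonal entries $\prod_{a\in I,\,b\notin I,\,b<a}(z_a-z_b+h)$ and writes ``details will be given elsewhere,'' and it summarizes the $e_h$-invertibility step as ``follows from equivariant localization'' without your explicit GKM check.
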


\begin{proof}
The Leray-Hirsch Theorem (e.g. \cite[Thm. 5.11]{BT}) implies that the $\C[z_1,\ldots,z_n,h]$-module $H_{\T}^*(\Gr)$ is free, with basis $[S_I]$. This implies (\ref{q1}). Statement (\ref{q5}) can be proved from Theorem \ref{thm:orto} and some extra analysis of the denominators, or from the fact that the transition matrix from $[S_I]$ to $\kappa'_I$ is upper triangular with diagonal entries
$$\prod_{a\in I, b\not\in I, b<a} (z_a-z_b+h).$$
Details will be given elsewhere. The equivalence of (\ref{q5}) and (\ref{q2}) follows from equivariant localization and the fact $\pm e_h(\gamma^*\otimes \gamma)\kappa'_I=W_I$. The transition matrix between (\ref{q2}) and (\ref{q3}), as well as the transition matrix between (\ref{q4}) and (\ref{q5}) are upper triangular with 1's in the diagonal.
\end{proof}

Recall that $S_I$ was defined using the standard flag
$$\spa(\epsilon_1) \subset \spa(\epsilon_1,\epsilon_2) \subset \ldots \subset \spa(\epsilon_1,\ldots,\epsilon_n).$$
Let $\sigma\in S_n$ be a permutation. We may, redefine our geometric objects $S_I$, $\tilde{S}_I$, $[C\!\tilde{S}_I]$, ${\kappa_I}$, ${\kappa'_I}$ using the complete flag
$$\spa(\epsilon_{\sigma(1)}) \subset \spa(\epsilon_{\sigma(1)},\epsilon_{\sigma(2)}) \subset \ldots \subset \spa(\epsilon_{\sigma(1)},\ldots,\epsilon_{\sigma(n)}).$$
We call the resulting objects $S^{\sigma}_I$, $\tilde{S}^{\sigma}_I$, $[C\!\tilde{S}^{\sigma}_I]$, $\kappa_I^{\sigma}$, ${\kappa'_I}^{\sigma}$.




\begin{proposition} We have
$$\kappa_I^{\sigma} = {\kappa_I} |_{z_u\mapsto z_{\sigma(u)}},$$
$${\kappa'_I}^{\sigma} = {\kappa'_I}|_{z_u\mapsto z_{\sigma(u)}}.$$
\end{proposition}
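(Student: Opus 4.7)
The plan is to realize $\kappa^{\sigma}_I$ as the pullback of $\kappa_I$ along a self-diffeomorphism of $\Gr$ (lifted to the Borel construction), where the pullback acts on the coefficient ring precisely by the substitution $z_u \mapsto z_{\sigma^{-1}(u)}$, and then relabel.

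Let $\phi_\sigma\colon \C^n \to \C^n$ be the linear isomorphism $\epsilon_u \mapsto \epsilon_{\sigma(u)}$. By construction $\phi_\sigma$ carries the standard flag to the $\sigma$-flag, so the induced diffeomorphisms of $\Gr$, $\Fl$, $T^*\!\Gr$, $T^*\!\Fl$ satisfy $\phi_\sigma(S_I) = S^{\sigma}_I$, $\phi_\sigma(\tilde S_I) = \tilde S^{\sigma}_I$, and $\phi_\sigma(C\tilde S_I) = C\tilde S^{\sigma}_I$. A direct computation shows $\phi_\sigma$ is equivariant only up to a twist: $\phi_\sigma(\tau \cdot x) = \psi(\tau) \cdot \phi_\sigma(x)$, where $\psi\colon \T \to \T$ permutes the first $n$ factors of $(\C^*)^n$ by $\sigma^{-1}$ and fixes the last $\C^*$. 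On $H^*(B\T) = \C[z_1,\ldots,z_n,h]$ the induced automorphism $(B\psi)^*$ sends $z_v \mapsto z_{\sigma^{-1}(v)}$.

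Choosing any $\psi$-equivariant map $\Psi\colon E\T \to E\T$, the formula $[e,x] \mapsto [\Psi(e), \phi_\sigma(x)]$ defines a lifted self-diffeomorphism $\tilde\phi_\sigma$ of $E\T \times_\T \Gr$ and $E\T \times_\T \Fl$. Its pullback $\tilde\phi_\sigma^*$ on equivariant cohomology acts on the subring $\C[z,h]$ as just described, while it preserves the Chern roots $t_a, \tilde t_b$: the bundle-level formula $[e,W,w] \mapsto [\Psi(e), \phi_\sigma(W), \phi_\sigma(w)]$ is well defined and furnishes a bundle isomorphism $\tilde\phi_\sigma^*(\widetilde\gamma) \cong \widetilde\gamma$ over the Borel construction, and similarly for the quotient bundle. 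By the standard functoriality of equivariant fundamental classes under diffeomorphism pullback, $\tilde\phi_\sigma^*[C\tilde S^{\sigma}_I] = [C\tilde S_I]$ in $H^*_\T(\Fl)$. Because $\pi \circ \phi_\sigma = \phi_\sigma \circ \pi$ on both source and target, push-forward commutes with $\tilde\phi_\sigma^*$, giving $\tilde\phi_\sigma^* \kappa^{\sigma}_I = \kappa_I$ in $H^*_\T(\Gr)$.

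Reading off the action on generators, this identity says $\kappa^{\sigma}_I(t, \tilde t, z_{\sigma^{-1}(1)},\ldots, z_{\sigma^{-1}(n)}, h) = \kappa_I(t, \tilde t, z_1,\ldots,z_n, h)$, which after the relabeling $z_v \mapsto z_{\sigma(v)}$ is precisely $\kappa^{\sigma}_I = \kappa_I|_{z_u \mapsto z_{\sigma(u)}}$. The statement for $\kappa'_I$ follows at once, since Definition \ref{def:modified} expresses $\kappa'_I$ as a $\C$-linear combination of the $\kappa_J$ with coefficients that depend only on combinatorial data of subsets (the integers $c'_{IJ}$ and the signs $\sgn$) and not on the $z_u$; the identical argument applies to the $\sigma$-objects, and the substitution commutes with taking the linear combination. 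The main obstacle is bookkeeping the twist $\psi$: one must verify carefully that $\tilde\phi_\sigma^*$ fixes the geometric generators $t_a, \tilde t_b$ while permuting the $z_u$, so that the only effect of the pullback on the polynomial representative of $\kappa^{\sigma}_I$ is the desired substitution. Once the explicit bundle-level map above is written down, the remainder reduces to routine functoriality of push-forward and pullback.
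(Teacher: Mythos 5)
Your proof is correct, but it runs on a different engine than the paper's. The paper settles the proposition in one sentence by equivariant localization: restriction to the torus fixed points is injective (Section \ref{sec:localization}), the fixed points of $\Gr$ and $\Fl$ are permuted by $\sigma$, and the weight computations in the proof of Theorem \ref{thm:NI_geometry} show that the fixed-point restrictions of the $\sigma$-objects are obtained from those of the standard objects by the relabelling $z_u\mapsto z_{\sigma(u)}$; injectivity then forces the global identity. You instead implement the relabelling globally: the coordinate permutation $\phi_\sigma$ of $\C^n$ carries every flag-dependent object to its $\sigma$-counterpart and is equivariant up to the twist $\psi$ of $\T$, so a $\psi$-twisted lift to the Borel construction pulls $[C\tilde S^{\sigma}_I]$ back to $[C\tilde S_I]$ while fixing the tautological Chern roots $t_a,\tilde t_b$ and $h$ and sending $z_v\mapsto z_{\sigma^{-1}(v)}$; compatibility with $\pi_*$ (a diffeomorphism commuting with $\pi$) and the unwinding of the substitution then give the claim, and the passage to $\kappa'_I$ is immediate because the coefficients $c'_{IJ}\sgn(J)/\sgn(I)$ are combinatorial constants independent of $z$. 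Your route requires more bookkeeping (the twist $\psi$, the bundle isomorphisms identifying the Chern roots, the base-change for $\pi_*$), all of which you handle correctly, but it buys independence from the injectivity of the localization map and makes manifest that any class built naturally from the flag transforms by the same substitution. Both arguments are complete; yours is the more self-contained of the two.
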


\begin{proof}
The statement follows from equivariant localization (see Section \ref{sec:localization}).
\end{proof}

\smallskip

Comparing this result with Section \ref{sec:R} we obtain the following remarkable fact. Let $\sigma$ be an elementary transposition in $S_n$. Then the geometrically defined action $\sgn(I){\kappa'_I}\mapsto \sgn(I){\kappa'_I}^{\sigma} $ on $H^*_{\T}(\Gr)\otimes \R$ can be expressed by an R-matrix.


\begin{example} \rm Let $n=2, k=1$, and let $\sigma$ be the transposition in $S_2$.
Then we have
$$
\begin{pmatrix}
	\cfrac{h}{z_2-z_1+h} & \cfrac{z_2-z_1}{z_2-z_1+h} \\
	\cfrac{z_2-z_1}{z_2-z_1+h} & \cfrac{h}{z_2-z_1+h}
\end{pmatrix}
\cdot
\begin{pmatrix}
-\kappa'_{\{1\}} \\
\ \ \kappa'_{\{2\}}
\end{pmatrix}
=
\begin{pmatrix}
-{\kappa'_{\{1\}}}^\sigma \\
\ \ \-{\kappa'_{\{2\}}}^\sigma
\end{pmatrix},
$$
which can be directly verified by substituting
\begin{align*}
\sgn(\{1\})=-1, &\qquad\qquad \kappa'_{\{1\}}=\kappa_{\{1\}}=z_2-t_1, \\
\sgn(\{2\})=+1, &\qquad\qquad \kappa'_{\{2\}}=\kappa_{\{2\}}+\kappa_{\{1\}}=(2t_1-z_1-z_2+h)+(z_2-t_1)=t_1-z_1+h.
\end{align*}
\end{example}



\end{document}